\def\today{${\scriptscriptstyle\number\day-\number\month-\number\year}$}
\newtheorem{theorem}{Theorem}[section]
\newtheorem*{thm}{Main Theorem}
\newtheorem{lemma}[theorem]{Lemma}
\newtheorem{definition}[theorem]{Definition}
\newtheorem{remark}[theorem]{Remark}
\def\address#1{{\center{#1}}}
\date{}
\def\m@th{\mathsurround=0pt}
\def\eqal#1{\null\,\vcenter{\openup\jot\m@th
 \ialign{\strut\hfil$\displaystyle{##}$&&$\displaystyle{{}##}$\hfil
 \crcr#1\crcr}}\,}
\def\matrix#1{\null\,\vcenter{\normalbaselines\m@th
 \ialign{\hfil$##$\hfil&&\quad\hfil$##$\hfil\crcr
 \mathstrut\crcr\noalign{\kern-\baselineskip}
 #1\crcr\mathstrut\crcr\noalign{\kern-\baselineskip}}}\,}
\def\lc#1{\hbox to .89\hsize{\qquad$\displaystyle{#1}\hfill$}}
\newcommand{\benn}{\begin{eqnarray*}}
\newcommand{\eenn}{\end{eqnarray*}}
\newcommand{\ben}{\begin{eqnarray}}
\newcommand{\een}{\end{eqnarray}}
\newcommand{\bal}{\begin{aligned}}
\newcommand{\eal}{\end{aligned}}
\def\D{{\mathbb D}}
\def\I{{\mathbb I}}
\def\N{{\mathbb N}}
\def\R{{\mathbb R}}
\def\T{{\mathbb T}}
\def\de{\delta}
\def\Om{\Omega}
\def\om{\omega}
\def\te{\theta}
\def\te{\theta}
\def\div{{\rm div}\,}
\def\divv{{\rm div}\,}
\def\nb{\nabla}
\def\dist{{\rm dist\,}}
\def\supp{{\rm supp\,}}
\def\const{{\rm const}}
\numberwithin{equation}{section}
\title{Weak solutions to incompressible heat-conducting motions with large flux}
\author{Joanna Renc\l awowicz$^{1)}$\quad Wojciech M. Zaj\c{a}czkowski$^{1), 2)}$}
\begin{document}

\maketitle

\address{$^{1)}$\ Institute of Mathematics, Polish Academy of Sciences,\\
\'Sniadeckich 8, 00-656 Warsaw, Poland\\ e-mail: jr@impan.pl, ORCID: 0000-0001-8597-9366, \\ {\it corresponding author,} \\
$^{2)}$\ Institute of Mathematics and Cryptology, Cybernetics Faculty, \\
Military University of Technology,\\
S. Kaliskiego 2, 00-908 Warsaw, Poland\\
e-mail: wz@impan.pl, ORCID: 0000-0003-1229-2162, \\ {\it emeritus professor at IMPAN} \\}

\begin{abstract}
We analyze the problem for viscous incompressible heat-conduc\-ting fluid in a finite cylinder with large inflow and outflow, modelled with Navier-Stokes equations coupled with the heat equation. We prove energy estimate without restrictions on the magnitudes of the external force, initial data, inflow and outflow. In order to estimate nonlinear terms we use weighted spaces. Next, using Galerkin approximation, we show existence of weak solutions.
\end{abstract}
\let\thefootnote\relax\footnote{{\it Keywords}: Navier-Stokes equation, heat-conducting, weak solutions, energy estimate \\
{\it Mathematics Subject Classification (MSC2020)}: Primary 35Q30; Secondary 76D03, 76D05  }

{\bf Abbreviated title:} Weak solutions for heat-conducting flow.

{\bf Statement relating to the ethics and integrity policies:}

\noindent On behalf of all authors, the corresponding author states that the manuscript is not published elsewhere and there is no conflict of interest.

\noindent The datasets generated during and/or analysed during the current study are available from the corresponding author on reasonable request.

{\bf Funding statement:} There are no funders to report for this submission.

\section{Introduction}\label{s1}

We consider the following motions of viscous incompressible heat-condu\-cting fluid in a finite cylinder $\Omega\subset\R^3$ with boundary $S=S_1\cup S_2$ with arbitrary large inflow and outflow
\begin{equation} \eqal{
& v_t+v\cdot\nabla v-\divv\T(v,p)=\omega(\theta)f \ \  {\rm in}\ \ \Omega\times\R_+, \cr
& \divv v=0\qquad {\rm in}\ \ \Omega\times\R_+, \cr
& \theta_t+v\cdot\nabla\theta-\varkappa\Delta\theta=0\qquad {\rm in}\ \ \Omega\times\R_+, \cr
& \nu\bar n\cdot\D(v)\cdot\bar\tau_\alpha+\gamma v\cdot\bar\tau_\alpha=0,\ \ \alpha=1,2,\qquad {\rm on}\ \ S_1\times\R_+, \cr
& v\cdot\bar n=0\qquad {\rm on}\ \ S_1\times\R_+, \cr
& \bar n\cdot\D(v)\cdot\bar\tau_\alpha=0,\ \ \alpha=1,2,\qquad {\rm on}\ \ S_2\times\R_+, \cr
& v\cdot\bar n=d\qquad {\rm on}\ \ S_2\times\R_+, \cr
& \bar n\cdot\nabla\theta=0\qquad {\rm on}\ \ S_1\times\R_+, \cr
& \bar n\cdot\nabla\theta=0\qquad {\rm on}\ \ S_2\times\R_+, \cr
& v|_{t=0}=v_0,\ \ \theta|_{t=0}=\theta_0\qquad {\rm in}\ \ \Omega, \cr}
\label{1.1}
\end{equation}
where $\Omega$ is parallel to the $x_3$-axis of the Cartesian coordinates\break $x=(x_1,x_2,x_3)$. Moreover, $S_1$ is parallel to the $x_3$-axis and $S_2$ is perpendicular to it, $v=(v_1(x,t),v_2(x,t),v_3(x,t))\in\R^3$ is the velocity of the fluid, $p=p(x,t)\in\R$ is the pressure, $\theta=\theta(x,t)\in\R_+$ is the temperature of the fluid, $f=(f_1(x,t),f_2(x,t),f_3(x,t))\in\R^3$ is the external force field, $\omega\in\R^1$ is the function of temperature, $\bar n$ is the unit outward vector normal to $S$ and $\bar\tau_\alpha$, $\alpha=1,2$, are tangent vectors to $S$. By $\T(v,p)=\nu\D(v)-p\I$ we denote the stress tensor, where $\nu>0$ is the constant viscosity coefficient, $\I$ is the unit matrix, and $\D(v)=\{v_{i,x_j}+v_{j,x_i}\}_{i,j=1,2,3}$ is the dilatation tensor. Additionally, $\varkappa>0$ is the constant heat conductivity coefficient, $\gamma>0$ is the slip coefficient.

Let $a$, $c_0$ be positive numbers and let $\phi_0(x_1,x_2)=c_0$ be a sufficiently smooth closed curve (not a circle) in the plane $x_3=\const\in[-a,a]$. Then
$$\eqal{
&S_1=\{x\in\R^3\colon\phi_0(x_1,x_2)=c_0,-a<x_3<a\},\cr
&S_2=S_2(-a)\cup S_2(a),\cr
&S_2(-a)=\{x\in\R^3\colon\phi_0(x_1,x_2)<c_0,x_3=-a\},\cr
&S_2(a)=\{x\in\R^3\colon\phi_0(x_1,x_2)<c_0,x_3=a\}.\cr}
$$
To describe inflow and outflow we denote $d=(d_1, d_2),$
$$
d|_{S_2(a_1)}=-d_1,\quad d|_{S_2(a_2)}=d_2,
$$
where $d_i\ge 0$, $i=1,2$, $a_1=-a$, $a_2=a$.

The continuity equation $(\ref{1.1})_2$ implies the compatibility condition
\begin{equation}
\intop_{S_2(-a)}d_1dS_2=\intop_{S_2(a)}d_2dS_2.
\label{1.11}
\end{equation}

In this paper we show the energy estimate to problem (\ref{1.1}) without restrictions on the
magnitudes of the external force $f,$ initial data $v(0),\theta(0),$ inflow
$d_1$ and outflow $d_2.$ We assume that the flux does not have to vanish as $t \rightarrow
\infty.$  The result requires applying the Hopf function (see \cite{L1}) and weighted estimates
proved in \cite{RZ2,RZ3,RZ4} and recalled in Lemma~\ref{l2.2}. Namely, we need to estimate nonlinear term caused by $v\cdot \nb v$ multiplied by test function in this way to get some smallness and to absorb this small term with norms on the l.h.s. Since the norm in $L_p$ is not small, we use weighted norm, see Remark~\ref{r3.3} for details.

Now, we formulate the main result, established in Theorems~\ref{l3.1} and \ref{l5.8}.
\begin{thm}\label{t1} (Weak solutions and energy estimate)

\noindent Let $(v,p,\theta)$ be a solution to problem (\ref{1.1}). Assume that \\
$d_i\in L_\infty(0,T;W_3^1(S_2(a_i)))\cap L_2(0,T;H^{1/2}(S_2(a_i))), i=1,2,$ $\theta(0)\in L_2(\Omega),$ $\omega f\in L_{6/5}(\Om^T).$ 
Then there exist weak solution $(w, \theta)$, where $ w$ is defined by (\ref{4.9}), $\vartheta$ is defined by (\ref{2-6})
and 
\ben \bal
& \|v\|_{V(\Omega^T)}^2 + \|\theta\|_{V(\Omega^T)}^2 \le  A_1^2+\|w(0)\|_{L_2(\Omega)}^2 
+ \|\vartheta(0)\|_{L_2(\Omega^t)}^2 \\ & +c\sup_{t'\in(0,T)}\|\tilde d(t')\|_{L_2(\Omega)}^2 +c\intop_{0}^T\|\tilde d(t')\|_{W^1_2(\Omega)}^2dt'\\ &  +c\sup_{t'\in(0,T]}\|\tilde d(t')\|_{W_{3,\infty}^1(\Omega)}^4\exp\bigg( \frac{1}{c}\sup_{t'\in(0,T)}\|\tilde d(t')\|_{W_{3,\infty}^1(\Omega)}\bigg)\cdot\\
&\quad\cdot\intop_{0}^T\sum_{i=1}^2\sup_{x_3}\|\tilde d_i(t')\|_{L_2(S_2(a_i))}^2dt'+c\|\om(\te)f\|_{L_{6/5}(\Omega)}^2,
\label{1-3} \eal
\een
 where $\tilde d_i$ is an extension of $d_i$ on $\Omega$, $\tilde{d}_i|_{S_2(a_i)} = d_i,$ $i=1,2,$  
 $d$ denotes $(d_1,d_2),$
\benn ||v||_{V(\Om^T)} \equiv {\rm ess}
\sup_{t\in(0,T)}||v||_{L_2(\Om)} + \left(\int_0^T
||\nb v||_{L_2(\Om)}^2 dt \right)^{1/2} \eenn 
and 
\benn
A_1^2=\int_{0}^{T}(\|\omega(\theta)f(t)\|_{L_{6/5}(\Omega)}^2+ \sum_{i=1}^2\|d_i(t)\|_{W^1_3(S_2(a_i))}^2+\sum_{i=1}^2\|d_{i,t}(t)\|_{W^1_{6/5}(S_2(a_i))}^2)dt \cdot\\
\quad\, \,\cdot\phi(\sum_{i=1}^2\sup_t\|d_i(t)\|_{W^1_3(S_2(a_i))})<\infty,
\eenn
with an increasing positive function $\phi.$ 
Moreover, $\theta$ is bounded from above and below with positive $\theta^*$ and $\theta_*,$ respectively, $\theta_* < \theta^*$
\ben \bal
&\|\theta\|_{V(\Omega^T)}^2 \le c\exp(\|d_1\|_{L_6(0,T;L_3(S_2(a_1)))}^6)\|\theta(0)\|_{L_2(\Omega)}^2
\\ & \quad
 + \|d_1\|^2_{L_1(S_2^T(a_1))}{\theta^*}^2+ \|\theta(0)\|_{L_2(\Om)}^2.
\label{1-4}
\eal \een
\end{thm}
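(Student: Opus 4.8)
The plan is to establish the theorem as the combination of Theorems~\ref{l3.1} and \ref{l5.8}: a closed a priori energy estimate, realized afterwards on a Galerkin scheme. The first and decisive reduction is to remove the inhomogeneous flux condition $v\cdot\bar n=d$ on $S_2$. Following Hopf (see \cite{L1}) I would construct a divergence-free extension $\tilde d$ of the boundary data, compatible with (\ref{1.11}), supported in a thin layer of width $\de$ near $S_2$. Setting $w=v-\tilde d$ as in (\ref{4.9}) converts the boundary condition into $w\cdot\bar n=0$ on all of $S$, at the cost of the additional terms $-\tilde d_t$, $-w\cdot\nb\tilde d-\tilde d\cdot\nb w-\tilde d\cdot\nb\tilde d$ and $\nu\,\divv\D(\tilde d)$ in the equation for $w$. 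In parallel I would introduce the temperature correction $\vartheta$ from (\ref{2-6}), so that the shifted temperature solves a heat equation with homogeneous Neumann data.

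For the energy estimate I would test the equation for $w$ with $w$ and the heat equation with $\te$. Since $\tilde d$ is divergence-free and $w\cdot\bar n=0$, the dissipation $\frac{\nu}{2}\|\D(w)\|_{L_2(\Om)}^2+\ga\|w\|_{L_2(S_1)}^2$ appears on the left, controlling $\|\nb w\|_{L_2(\Om)}^2$ through Korn's inequality, while the purely cubic convective terms $\intop_\Om(w\cdot\nb w)\cdot w\,dx$ and $\intop_\Om(\tilde d\cdot\nb w)\cdot w\,dx$ either vanish or reduce to a boundary integral over $S_2$ weighted by the flux. The genuinely dangerous term is $\intop_\Om(w\cdot\nb\tilde d)\cdot w\,dx$, together with its heat analogue: because no smallness of $d$ is assumed, it cannot be dominated by the dissipation via a bare Sobolev inequality, the $L_p$ norm of $\tilde d$ being of order one. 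This is the main obstacle. Its resolution—and the reason weighted spaces enter—is that $\tilde d$ lives in the thin layer of width $\de$, so estimating $w$ through the weighted inequalities recalled in Lemma~\ref{l2.2} (from \cite{RZ2,RZ3,RZ4}) produces a factor that is small with $\de$ and can be absorbed on the left. After absorption, Gronwall's inequality yields (\ref{1-3}); the exponential dependence on $\sup_{t}\|\tilde d\|_{W^1_{3,\infty}(\Om)}$ is precisely the Gronwall factor and not a smallness restriction.

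The temperature bounds come from the structure of $(\ref{1.1})_3$: it is a linear advection--diffusion equation with divergence-free drift $v$ and homogeneous Neumann data on all of $S$, so the maximum principle gives $\te_*\le\te\le\te^*$, with $\te_*$ and $\te^*$ the essential infimum and supremum of $\te_0$. In particular $\om(\te)$ is bounded and $\om(\te)f\in L_{6/5}(\Om^T)$ holds under the stated hypotheses. Testing $(\ref{1.1})_3$ with $\te$, integrating by parts, and using $\divv v=0$ reduces the convective contribution to a flux integral over $S_2$; bounding it by $\te^{*2}$ times $\|d_1\|_{L_1(S_2^T(a_1))}$ and closing with Gronwall produces the dedicated estimate (\ref{1-4}), the exponential factor $\exp(\|d_1\|^6_{L_6(0,T;L_3(S_2(a_1)))})$ again originating from the Gronwall constant associated with the boundary flux.

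Finally, for existence I would run Galerkin approximation in a basis adapted to the homogeneous boundary conditions for $w$ and $\te$, solve the resulting ODE system locally, and observe that all a priori bounds of the previous steps hold uniformly in the approximation index, since none of them used smallness of the data. The uniform $V(\Om^T)$ bounds, combined with bounds on $\pa_t w^{(k)}$ and $\pa_t\te^{(k)}$ in a negative-order space, give strong $L_2(\Om^T)$ convergence via the Aubin--Lions lemma, which suffices to pass to the limit in the quadratic convective terms and in $\om(\te^{(k)})$. Reverting the substitutions $v=w+\tilde d$ and the temperature shift then returns a weak solution of (\ref{1.1}) satisfying (\ref{1-3})--(\ref{1-4}).
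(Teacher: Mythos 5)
You have the right global architecture (Hopf-type extension near $S_2$, energy estimate with absorption, maximum principle for $\theta$, Galerkin plus Aubin--Lions), but the central analytic step is asserted rather than proved, and it is asserted in a form that does not match what the paper actually does. You posit a \emph{divergence-free} extension $\tilde d$ of the flux data supported in a thin layer and then invoke Lemma~\ref{l2.2} ``to estimate $w$''. Lemma~\ref{l2.2} is a weighted elliptic estimate for the Neumann problem (\ref{2.1}); it has nothing to do with estimating $w$. It enters the paper precisely because the Hopf extension $b=\sum_{i=1}^2\tilde d_i\eta_i\bar e_3$ of (\ref{4.5}) is \emph{not} divergence-free when the profiles $d_i$ depend on $x'$ (and here they are merely in $W_3^1(S_2(a_i))$): one has $\divv b=\partial_{x_3}(\sum_i\tilde d_i\eta_i)\neq0$, so the paper corrects by solving (\ref{4.7}) and setting $\delta=b+\nabla\varphi$ in (\ref{4.8}). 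The smallness needed to absorb $\int_\Omega\delta\cdot\nabla w\cdot w\,dx$ and $\int_\Omega w\cdot\nabla\delta\cdot w\,dx$ is then extracted by three distinct mechanisms: the thin-layer factor $\varrho^{1/6}$ for the $b$-part ($J_1$); the weighted estimates (\ref{3.11})--(\ref{3.12}) for the corrector $\nabla\varphi$ ($J_2$, $J_4$), where the weight $\sigma^{3\mu}$ with $\mu>2/3$ is not of Muckenhoupt type and the restriction $\mu>2/3$ is essential; and a Hardy-type inequality exploiting $|\eta_i'|\le\varepsilon/\sigma_i$ for $J_3$ in (\ref{3.14}). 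If your extension were genuinely divergence-free with the Hopf gradient bound, no corrector and no Lemma~\ref{l2.2} would be needed at all --- but for non-constant, time-dependent $d_i$ of this low regularity, constructing such an extension is exactly the hard step your sketch skips, and the $b+\nabla\varphi$ decomposition is the paper's substitute for it. As written, the absorption step of your proof hangs on an unconstructed object.

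A second, related inaccuracy: you attribute the factor $\exp\big(\frac1c\sup_t\|\tilde d\|_{W_{3,\infty}^1(\Omega)}\big)$ in (\ref{1-3}) to Gronwall. In the paper no Gronwall growth occurs in the $w$-estimate: after the parameter choice (\ref{3.24}), $\varepsilon\sim\nu/\|\tilde d\|_{W_{3,\infty}^1(\Omega)}$ and $\varrho=\varepsilon^6$, the nonlinear terms are absorbed by $\nu\|w\|_{1,\Omega}^2$ via (\ref{3.22}), and the resulting inequality $dX/dt+\tilde c_2Y\le F$ is integrated directly and uniformly in $k$ (Lemma~\ref{l3.2}); a Gronwall exponential in time would destroy that uniform-in-$k$ bound. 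The exponential actually originates in the non-absorbable pieces $|\tilde d_i\nabla\eta_i|$, $|\delta_t|$, $|\delta\cdot\nabla\delta|$ of (\ref{3.17})--(\ref{3.19}), which carry factors such as $\varepsilon^2\varrho^{-1}e^{1/\varepsilon}$; substituting (\ref{3.24}) turns these into $\|\tilde d\|_{W_{3,\infty}^1(\Omega)}^4\exp\big(\frac1c\|\tilde d\|_{W_{3,\infty}^1(\Omega)}\big)$. This is the structural price of taking the Hopf parameter small in inverse proportion to the data, not a Gronwall constant. Finally, on the temperature: your maximum-principle claim is correct in spirit, but since the drift $v$ has nonzero normal component on $S_2$ (inflow), the bounds $\theta_*\le\theta\le\theta^*$ require $d_1\in L_2(0,T;L_\infty(S_2(-a)))$ and are proved in Lemmas~\ref{l2.3}--\ref{l2.4} by the $L_s$, $s\to\infty$ argument with a boundary flux term; moreover, recovering ${\rm ess}\sup_t\|\theta\|_{L_2(\Omega)}$ for (\ref{1-4}) needs the mean-value/Poincar\'e decomposition of Theorem~\ref{l5.8}, which is where the term $\|d_1\|_{L_1(S_2^T(a_1))}^2{\theta^*}^2$ comes from; your sketch omits this step.
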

We show the existence of weak solutions to problem (\ref{1.1}) through the Galerkin method, similarly as in \cite{ZZ}. On the other hand, comparing results with \cite{ZZ}, we can relax conditions on function $\omega(\theta)$ since we prove additionally boundedness for temperature $\theta$ from below an above, see Lemma~\ref{l2.4}.

\section{Notation and auxiliary results}\label{s2}

First we introduce a simplified notation of spaces used in this paper.

\begin{definition}[see \cite{Ad}]\label{d2.1}
For Lebesque and Sobolev spaces we set the following notation
$$\eqal{
&\|u\|_{L_p(Q)}=|u|_{p,Q},\quad \|u\|_{L_p(Q^t)}=|u|_{p,Q^t},\cr
&\|u\|_{L_q(0,t;L_p(Q))}=|u|_{p,q,Q^t},\cr}
$$
where $Q$ is equal either $\Omega$, or $S=\partial\Omega$, or $\R^3$, $Q^t=Q\times(0,t)$, $p,q\in[1,\infty]$.

Let $H^s(Q)=W_2^s(Q)$, $s\in\N_0=\N\cup\{0\}$. Then we denote
$$\eqal{
&\|u\|_{H^s(Q)}=\|u\|_{s,Q},\quad &\|u\|_{W_p^s(Q)}=\|u\|_{s,p,Q},\cr
&\|u\|_{L_q(0,t;W_p^s(Q))}=\|u\|_{s,p,q,Q^t},\quad &\|u\|_{L_q(0,t;W_q^s(Q))}=\|u\|_{s,q,Q^t}.\cr}
$$
\end{definition}

Introduce weighted spaces $L_{p,\mu}(\Omega), L^k_{p,\mu}(\Omega)$, $p\in[1,\infty]$, $\mu\in(0,1)$, with the norms
$$
\|u\|_{L_{p,\mu}(\Omega)}=\bigg(\intop_\Omega|u|^p\eta^{p\mu}dx\bigg)^{1/p},
$$
\benn
\|u\|_{L_{p,\mu}^k(\Omega)}=\bigg(\sum_{|\alpha|=k}\int_\Omega |D_x^\alpha u|^p\eta^{p\mu}dx\bigg)^{1/p}<\infty,
\eenn
where $\eta(x_3)=\min_{i=1,2}\dist(x_3,S_2(a_i)).$ 

\noindent We define also
\benn V(\Om^t) & \equiv & V_2^0(\Om^T) = \{u: ||u||_{V^0_2(\Om^T)} = {\rm ess}
\sup_{t\in(0,T)}||u||_{L_2(\Om)} \\ & & + \left(\int_0^T
||\nb u||_{L_2(\Om)}^2 dt \right)^{1/2} <\infty \}, \\
V_2^1(\Om^T) & \equiv & \{u: ||u||_{V^1_2(\Om^T)} = {\rm ess}
\sup_{t\in(0,T)}||u||_{H^1(\Om)} \\ & & + \left(\int_0^T ||\nb
u||_{H^1(\Om)}^2 dt \right)^{1/2} <\infty \}. \eenn 

Consider the Neumann problem (see (\ref{4.7}))
\begin{equation}\eqal{
&\Delta\varphi=-\divv b\quad &{\rm in}\ \ \Omega,\cr
&\bar n\cdot\nabla\varphi=0\quad &{\rm on}\ \ S,\cr
&\intop_\Omega\varphi dx=0,\cr
& \int_{\Omega} \divv b = 0. \cr}
\label{2.1}
\end{equation}

\begin{lemma}[see \cite{RZ3}, \cite{RZ4}]\label{l2.2}
Let $\divv b\in L_{p,\mu}(\Omega)$, $p\ge 2$, $\mu\in(0,1)$. Then the weak solution to (\ref{2.1}) satisfies the estimate
\begin{equation}
\|\nabla^2\varphi\|_{L_{p,\mu}(\Omega)}\le c\|\divv b\|_{L_{p,\mu}(\Omega)}.
\label{2.2}
\end{equation}
\end{lemma}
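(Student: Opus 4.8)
The plan is to obtain the weighted bound (\ref{2.2}) from the \emph{unweighted} Calder\'on--Zygmund estimate for the Neumann Laplacian, exploiting the fact that the weight $\eta^{p\mu}$ degenerates only near the flat caps $S_2(a_i)$ and is comparable to a positive constant everywhere else. First I would record the standard a priori estimate $\|\nabla^2\varphi\|_{L_p(\Omega)}\le c\|\divv b\|_{L_p(\Omega)}$ for the weak solution of (\ref{2.1}); this is classical second-order elliptic regularity for the Neumann problem, the only subtlety being the edges where $S_1$ meets $S_2$, which for the pure Neumann Laplacian on this geometry do not spoil $L_p$ regularity. Note that the compatibility condition $\int_\Omega\divv b=0$ built into (\ref{2.1}) guarantees solvability, and the normalization $\int_\Omega\varphi\,dx=0$ fixes $\varphi$ uniquely.

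Next I would fix a small $\delta>0$ and split $\Omega$ into the interior part $\{\eta(x_3)>\delta\}$ and the two boundary layers $\{\eta(x_3)\le\delta\}$ adjacent to the caps. On the interior part the weight satisfies $\delta^{p\mu}\le\eta^{p\mu}\le a^{p\mu}$, so the weighted and unweighted norms are equivalent up to a constant depending on $\delta$, and (\ref{2.2}) there follows at once from the unweighted estimate. The entire content of the lemma is therefore concentrated in the two boundary layers, where $\eta(x_3)$ is comparable to $\dist(x_3,S_2(a_i))$.

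In each boundary layer I would use a dyadic decomposition in the axial variable: cover the layer by cylindrical slabs $\Lambda_k=\{2^{-k-1}<\eta<2^{-k}\}$, $k\ge k_0$, on each of which $\eta^{p\mu}\approx 2^{-kp\mu}$ is essentially constant. Choosing cut-offs $\zeta_k=\zeta_k(x_3)$ supported in $\Lambda_{k-1}\cup\Lambda_k\cup\Lambda_{k+1}$ with $|\nabla^j\zeta_k|\le c\,2^{jk}$, the function $\zeta_k\varphi$ is compactly supported in $x_3$ and satisfies the homogeneous Neumann condition on the lateral part of $S_1$ (since $\nabla\zeta_k$ is axial and $\bar n$ is horizontal there). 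Rescaling $x_3\mapsto 2^{k}x_3$ turns each slab into a cylinder of unit height, to which the unweighted estimate of the first step applies with a constant independent of $k$. Undoing the scaling, multiplying by $2^{-kp\mu}$ and summing the geometric series in $k$ reproduces $\|\nabla^2\varphi\|_{L_{p,\mu}(\Omega)}^p$ on the layer.

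The main obstacle, and the step that really has to be controlled, is the nonlocality of the solution operator. The identity $\Delta(\zeta_k\varphi)=-\zeta_k\,\divv b+2\nabla\zeta_k\cdot\nabla\varphi+\varphi\,\Delta\zeta_k$ produces commutator terms carrying factors $2^{k}\sim\eta^{-1}$ and $2^{2k}\sim\eta^{-2}$. After scaling and summation these generate, besides $c\|\divv b\|_{L_{p,\mu}(\Omega)}$, the lower-order quantities $\|\eta^{\mu-1}\nabla\varphi\|_{L_p(\Omega)}$ and $\|\eta^{\mu-2}\varphi\|_{L_p(\Omega)}$, which must be reduced back to the second-order weighted norm or made absorbable. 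The tool for this is a one-dimensional Hardy inequality in $x_3$ together with the Neumann condition $\bar n\cdot\nabla\varphi=0$ on the caps, which lets one trade each inverse power of $\eta$ for a derivative; it is exactly here that the restrictions $\mu\in(0,1)$ and $p\ge 2$ enter the admissible Hardy exponents. Taking $\delta$ small makes the constants in front of the absorbable terms small, so that they can be moved to the left-hand side, leaving (\ref{2.2}). As an alternative valid on the restricted range $0<\mu<1-1/p$ one may simply note that $\eta^{p\mu}$ is then a Muckenhoupt $A_p$ weight and invoke the weighted boundedness of Calder\'on--Zygmund operators directly; the dyadic/Hardy argument above is what is needed to push the estimate to the full range $\mu\in(0,1)$.
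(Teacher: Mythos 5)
First, note that the paper itself contains no proof of Lemma~\ref{l2.2}: it is imported verbatim from \cite{RZ3}, \cite{RZ4} (see also Chapters 10 and 11 of \cite{RZ2}), so your attempt can only be measured against those external arguments. You correctly identify the crux: as the paper stresses after (\ref{3.12}), in the regime actually used ($p=3$, $\mu>2/3=1-1/p$) the weight $\eta^{p\mu}$ is \emph{not} Muckenhoupt, so your $A_p$ fallback covers only $\mu<1-1/p$ and the dyadic slab/Hardy argument must carry the full range $\mu\in(0,1)$. The skeleton (unweighted $W_p^2$ estimate, axial cutoffs $\zeta_k=\zeta_k(x_3)$, commutators, Hardy in $x_3$) is of the same general family as the localization techniques of the cited works, but two steps fail as written.

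First, the zero-order commutator $\varphi\,\Delta\zeta_k=\zeta_k''\varphi$ produces, after rescaling and summation, the quantity $\|\eta^{\mu-2}\varphi\|_{L_p(\Omega)}$, and this is generically \emph{infinite}: since $(\mu-2)p<-1$ for every $\mu\in(0,1)$, $p\ge2$, finiteness would force the trace of $\varphi$ on the caps to vanish, whereas the Neumann condition annihilates $\partial_{x_3}\varphi$ on $S_2$, not $\varphi$ itself. So Hardy cannot ``trade the inverse power for a derivative'' here -- the required vanishing is absent. (Your first-order commutator is fine precisely because $\zeta_k$ depends only on $x_3$, so it involves $\partial_{x_3}\varphi$, which does vanish on the caps.) The standard repair -- cut off $\varphi-c_k$ with slab-wise constants, or subtract the cap trace, and then reduce $\eta^{\mu-2}(\varphi-c_k)$ by two Hardy steps applied to $\partial_{x_3}\varphi$ -- is missing from your proposal. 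Second, the absorption claim is false: the Hardy constant on $(0,\delta)$ with homogeneous power weights is scale-invariant, so shrinking $\delta$ does not shrink it; after Hardy the commutator terms return as $C\|\nabla^2\varphi\|_{L_{p,\mu}}$ on the layer with a fixed constant $C$ that need not be less than $1$, and there is no smallness with which to absorb them into the left-hand side. Two lesser points: for $\mu>1-1/p$ -- exactly the range the paper needs -- Hardy from the vanishing end is inadmissible and one must integrate from $\eta=\delta$, picking up boundary terms you do not account for; and rescaling only $x_3$ is anisotropic, so the Laplacian does not transform into itself -- this is repairable by using local Agmon--Douglis--Nirenberg estimates at scale $2^{-k}$ instead, but as stated the step is incorrect. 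The proofs in \cite{RZ3}, \cite{RZ4} sidestep the absorption problem by estimating the singular integral representation of $\nabla^2\varphi$ with the power weight directly, which is why the lemma holds on the full range $\mu\in(0,1)$ without any smallness argument.
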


In what follows we use the anisotropic Sobolev spaces $W_{p_1,p_2}^1(\Omega)$ and $L_{p_1,p_2}(\Omega)$, where $p_1,p_2\in[1,\infty]$ with the norms
$$
\|u\|_{W_{p_1,p_2}^k(\Omega)}=\bigg[\bigg(\intop_{S_2}\sum_{|\alpha|\le k}|D_x^\alpha u|^{p_1}dS_2\bigg)^{p_2/p_1}dx_3\bigg]^{1/p_2},
$$
where $k\in\N\cup\{0\}$ and $W_{p_1,p_2}^0(\Omega)=L_{p_1,p_2}(\Omega)$.

Consider the problem
\begin{equation}\eqal{
&\theta_t+v\cdot\nabla\theta-\varkappa\Delta\theta=0,\cr
&\divv v=0,\ \ v\cdot\bar n|_{S_1}=0,\ \ v\cdot\bar n|_{S_2}=d,\ \ \bar n\cdot\nabla\theta|_S=0,\cr
&\theta|_{t=0}=\theta_0.\cr}
\label{2.3}
\end{equation}

\begin{lemma}\label{l2.3}
Let $\inf_{x\in\Omega}\theta_0(x)=\theta_*>0$ and $d_1 \in L_2(0,t; L_{\infty}(S_2(-a)).$\\
Then solutions to (\ref{2.3}) satisfy
\begin{equation}
\theta\ge\theta_*.
\label{2.4}
\end{equation}
\end{lemma}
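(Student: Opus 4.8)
The plan is to establish the lower bound as a minimum principle, carried out by an energy estimate on the negative part of $\theta-\theta_*$. First I would set $u=\theta-\theta_*$. Since $\theta_*$ is a constant, $u$ satisfies the same convection--diffusion equation $(\ref{2.3})_1$, namely $u_t+v\cdot\nb u-\varkappa\Delta u=0$, together with the Neumann condition $\bar n\cdot\nb u|_S=0$ and the initial datum $u|_{t=0}=\theta_0-\theta_*\ge0$. Writing $u_-=\max\{-u,0\}\ge0$, the claim $\theta\ge\theta_*$ is equivalent to $u_-\equiv0$, and the natural tool is to test the equation with $-u_-$ and integrate over $\Om$.

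Carrying out the integration by parts, the three terms organize by sign. The time derivative produces $\tfrac12\frac{d}{dt}\intop_\Om u_-^2\,dx$. The diffusion term gives $\varkappa\intop_\Om|\nb u_-|^2\,dx$, with the surface contribution $\varkappa\intop_S u_-\,\bar n\cdot\nb u\,dS$ vanishing thanks to the Neumann boundary condition $\bar n\cdot\nb u|_S=0$. For the convective term I would integrate by parts and use $\divv v=0$ to kill the interior part, leaving only $\tfrac12\intop_S (v\cdot\bar n)\,u_-^2\,dS$; since $v\cdot\bar n|_{S_1}=0$ and $v\cdot\bar n|_{S_2}=d$, this reduces to an integral over $S_2=S_2(-a)\cup S_2(a)$. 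On the outflow part $d=d_2\ge0$ gives a favorable (nonnegative) term, while on the inflow part $d=-d_1$ gives the only term of unfavorable sign. Collecting everything yields the identity
\[
\tfrac12\frac{d}{dt}\intop_\Om u_-^2\,dx+\varkappa\intop_\Om|\nb u_-|^2\,dx+\tfrac12\intop_{S_2(a)}d_2\,u_-^2\,dS=\tfrac12\intop_{S_2(-a)}d_1\,u_-^2\,dS.
\]

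The main obstacle is controlling the inflow boundary term on the right, which is exactly where the hypothesis $d_1\in L_2(0,t;L_\infty(S_2(-a)))$ enters. I would bound it by $\tfrac12\|d_1(t)\|_{L_\infty(S_2(-a))}\|u_-\|_{L_2(S_2(-a))}^2$ and then apply the trace interpolation inequality $\|u_-\|_{L_2(S)}^2\le\delta\|\nb u_-\|_{L_2(\Om)}^2+c_\delta\|u_-\|_{L_2(\Om)}^2$, choosing $\delta$ proportional to $\varkappa/\|d_1(t)\|_{L_\infty(S_2(-a))}$ so that the resulting gradient contribution is absorbed into the term $\varkappa\intop_\Om|\nb u_-|^2\,dx$ on the left. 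Discarding the nonnegative outflow term, this produces the differential inequality
\[
\frac{d}{dt}\|u_-\|_{L_2(\Om)}^2\le \frac{c}{\varkappa}\,\|d_1(t)\|_{L_\infty(S_2(-a))}^2\,\|u_-\|_{L_2(\Om)}^2 .
\]
Because $t\mapsto\|d_1(t)\|_{L_\infty(S_2(-a))}^2$ is integrable on $(0,t)$ by assumption, Gronwall's lemma gives $\|u_-(t)\|_{L_2(\Om)}^2\le\|u_-(0)\|_{L_2(\Om)}^2\exp\big(\tfrac{c}{\varkappa}\intop_0^t\|d_1(s)\|_{L_\infty(S_2(-a))}^2\,ds\big)$. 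Since $\theta_0\ge\theta_*$ forces $u_-(0)=0$, we conclude $u_-\equiv0$, that is, $\theta\ge\theta_*$, which proves \eqref{2.4}.
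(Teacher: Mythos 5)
Your proof is correct and follows essentially the same route as the paper: testing with the negative part of $\theta-\theta_*$, reducing the convective term to boundary integrals over $S_2$, discarding the favorable outflow term, absorbing the inflow term via the trace interpolation $\|u_-\|_{L_2(S)}^2\le\delta\|\nabla u_-\|_{L_2(\Omega)}^2+c_\delta\|u_-\|_{L_2(\Omega)}^2$, and closing with Gronwall using $d_1\in L_2(0,t;L_\infty(S_2(-a)))$. If anything, your write-up is slightly more explicit than the paper's about the sign of the $S_2(a)$ term and the vanishing of the Neumann surface contribution, but the argument is the same.
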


\begin{proof}
Multiply $(\ref{2.3})_1$ by $(\theta-\theta_*)_-=\min\{\theta-\theta_*,0\}$ and integrate over $\Omega$. Then we obtain
\begin{equation}\eqal{
&\frac{1}{2}\frac{d}{dt}\intop_\Omega(\theta-\theta_*)_-^2dx+\varkappa |\nabla(\theta-\theta_*)_-|^2dx\cr
&=-\intop_\Omega v\cdot\nabla(\theta-\theta_*)_-(\theta-\theta_*)_-dx\cr}
\label{2.5}
\end{equation}
The r.h.s. of (\ref{2.5}) equals
$$\eqal{
&-\frac{1}{2}\intop_\Omega v\cdot\nabla(\theta-\theta_*)_-^2dx=-\frac{1}{2}\intop_{S_2}v\cdot\bar n(\theta-\theta_*)_-^2dS_2\cr
&=\frac{1}{2}\intop_{S_2(-a)}d_1(\theta-\theta_*)_-^2dS_2-\frac{1}{2}\intop_{S_2(a)}d_2(\theta-\theta_*)_-^2dS_2\cr}
$$
Hence, (\ref{2.5}) takes the form
\begin{equation}\eqal{
&\frac{1}{2}\frac{d}{dt}|(\theta-\theta_*)_-|_{2,\Omega}^2+\varkappa|\nabla(\theta-\theta_*)_-|_{2,\Omega}^2\cr
&\le\frac{1}{2}\intop_{S_2(-a)}d_1(\theta-\theta_*)_-^2dS_2\le\frac{1}{2}|d_1|_{\infty,S_2(-a)}\intop_{S_2(-a)}(\theta-\theta_*)_-^2dS_2\cr
&\le\frac{1}{2}|d_1|_{\infty,S_2(-a)}(\varepsilon|\nabla(\theta-\theta_*)_-|_{2,\Omega}^2+ \frac{c}{\varepsilon}|(\theta-\theta_*)_-|_{2,\Omega}^2),\cr}
\label{2.6}
\end{equation}
where we used some interpolation. For $\varepsilon$ sufficiently small we obtain
\begin{equation}\eqal{
&\frac{d}{dt}|(\theta-\theta_*)_-|_{2,\Omega}^2+\varkappa|\nabla(\theta-\theta_*)_-|_{2,\Omega}^2\cr
&\le c|d_1|_{\infty,S_2(-a)}^2|(\theta-\theta_*)_-|_{2,\Omega}^2.\cr}
\label{2.7}
\end{equation}
Dropping the second term on the l.h.s. of (\ref{2.7}) and integrating with respect to time yield
\begin{equation}\eqal{
&\intop_\Omega(\theta(t)-\theta_*)_-^2dx\le \exp\bigg(c\intop_0^t|d_1(t')|_{\infty,S_2(-a)}^2dt'\bigg)\cdot\cr
&\quad\cdot\intop_\Omega(\theta(0)-\theta_*)_-^2dx.\cr}
\label{2.8}
\end{equation}
Since $\theta(0)>\theta_*$ we have that $(\theta(0)-\theta_*)_-=0$ so also $(\theta(t)-\theta_*)_-=0$. This proves (\ref{2.4}) and concludes the proof.
\end{proof}

\begin{lemma}\label{l2.4}
Let $\theta_*$, $\theta^*$ be positive constants such that $\theta_*<\theta^*$ and $\theta_*\le\theta_0\le\theta^*$. Let $d_1 \in L_2(\R_+, L_{\infty}(S_2(-a))) $ Then solutions to (\ref{2.3}) satisfy
\begin{equation}
\theta_*\le\theta(t)\le\theta^*.
\label{2.9}
\end{equation}
\end{lemma}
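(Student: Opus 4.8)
The lower bound $\theta\ge\theta_*$ is precisely the content of Lemma~\ref{l2.3}, so the only new assertion is the upper bound $\theta\le\theta^*$. The plan is to run the same truncation argument with the opposite one-sided part: instead of testing against $(\theta-\theta_*)_-$ I would test $(\ref{2.3})_1$ against $(\theta-\theta^*)_+=\max\{\theta-\theta^*,0\}$ and integrate over $\Omega$. Since $\theta^*$ is constant, the time-derivative term yields $\frac12\frac{d}{dt}|(\theta-\theta^*)_+|_{2,\Omega}^2$, and the diffusion term, after integration by parts, produces $\varkappa|\nabla(\theta-\theta^*)_+|_{2,\Omega}^2$; here the boundary integral drops out because $\bar n\cdot\nabla\theta|_S=0$, and I use $\nabla\theta\cdot\nabla(\theta-\theta^*)_+=|\nabla(\theta-\theta^*)_+|^2$ on the set $\{\theta>\theta^*\}$.

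For the convective term I would write $v\cdot\nabla\theta\,(\theta-\theta^*)_+=\frac12 v\cdot\nabla(\theta-\theta^*)_+^2$, integrate by parts using $\divv v=0$, and split the boundary. On $S_1$ the integral vanishes since $v\cdot\bar n=0$; on $S_2$ I substitute $v\cdot\bar n=d$ with $d|_{S_2(-a)}=-d_1$, $d|_{S_2(a)}=d_2$, exactly as in the passage preceding (\ref{2.6}). This produces the identity
\begin{equation}\eqal{
&\frac12\frac{d}{dt}|(\theta-\theta^*)_+|_{2,\Omega}^2+\varkappa|\nabla(\theta-\theta^*)_+|_{2,\Omega}^2\cr
&=\frac12\intop_{S_2(-a)}d_1(\theta-\theta^*)_+^2dS_2-\frac12\intop_{S_2(a)}d_2(\theta-\theta^*)_+^2dS_2.\cr}
\nonumber
\end{equation}
The key observation is that, since $d_2\ge0$, the outflow integral has the favorable sign and may be discarded, which is exactly why no hypothesis on $d_2$ is required. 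The remaining inflow term is controlled precisely as in (\ref{2.6}): bound it by $\frac12|d_1|_{\infty,S_2(-a)}\intop_{S_2(-a)}(\theta-\theta^*)_+^2dS_2$ and apply the trace–interpolation inequality $\intop_{S_2(-a)}(\cdot)^2dS_2\le\varepsilon|\nabla(\theta-\theta^*)_+|_{2,\Omega}^2+\frac{c}{\varepsilon}|(\theta-\theta^*)_+|_{2,\Omega}^2$.

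Choosing $\varepsilon$ small to absorb the gradient term into the left-hand side, dropping the remaining diffusion term, and applying Gronwall's inequality gives, in parallel with (\ref{2.8}),
\begin{equation}
\intop_\Omega(\theta(t)-\theta^*)_+^2dx\le\exp\bigg(c\intop_0^t|d_1(t')|_{\infty,S_2(-a)}^2dt'\bigg)\intop_\Omega(\theta(0)-\theta^*)_+^2dx,
\nonumber
\end{equation}
where the exponent is finite by the assumption $d_1\in L_2(\R_+,L_\infty(S_2(-a)))$. Since $\theta(0)\le\theta^*$ forces $(\theta(0)-\theta^*)_+=0$, we conclude $(\theta(t)-\theta^*)_+=0$, i.e. $\theta(t)\le\theta^*$; combined with Lemma~\ref{l2.3} this yields (\ref{2.9}). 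The argument contains no genuine obstacle beyond Lemma~\ref{l2.3}; the only point demanding care is the sign bookkeeping of the two $S_2$ boundary integrals, ensuring that the outflow contribution is nonpositive so that solely the inflow $d_1$ must be estimated.
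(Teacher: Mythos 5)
Your proof is correct, but it takes a genuinely different route from the paper's. The paper does not obtain the upper bound by truncation; instead it runs a Moser-type $L_s$ scheme: it multiplies $(\ref{2.3})_1$ by $\theta^{s-1}$, absorbs the inflow boundary term by the same trace--interpolation step you use, arrives at $\frac{1}{s}\frac{d}{dt}|\theta|_{s,\Omega}^s\le\frac{c}{s}|d_1|_{\infty,S_2(-a)}^2|\theta|_{s,\Omega}^s$, integrates in time, and lets $s\to\infty$ to conclude $|\theta(t)|_{\infty,\Omega}\le\theta^*$; the lower bound is then re-derived independently rather than quoted from Lemma~\ref{l2.3}, by testing with $\theta^{-s}$ and passing $s\to\infty$ in the norms of $1/\theta$. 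Your Stampacchia-style test function $(\theta-\theta^*)_+$ is the exact mirror of the paper's own proof of Lemma~\ref{l2.3}, and it is arguably cleaner here: it avoids the limit passage $s\to\infty$ entirely and, by citing Lemma~\ref{l2.3} for the lower bound, avoids manipulating the negative powers $\theta^{-s}$, which only make sense once one already knows $\theta>0$ a priori (the truncation argument needs no such positivity). What the paper's method buys in exchange is a family of quantitative $L_s$ estimates, $|\theta(t)|_{s,\Omega}\le\exp\big(\frac{c}{s}\intop_0^t|d_1(t')|_{\infty,S_2(-a)}^2dt'\big)|\theta(0)|_{s,\Omega}$ for every finite $s$, of which the $L_\infty$ bound is the limiting case. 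Your sign bookkeeping on $S_2$ (discarding the outflow term since $d_2\ge0$ and estimating only the $d_1$ inflow contribution) matches the paper's treatment in Lemma~\ref{l2.3}, and your absorption step with small $\varepsilon$ is the same implicit time-dependent choice $\varepsilon\sim|d_1|_{\infty,S_2(-a)}^{-1}$ that the paper makes in passing from (\ref{2.6}) to (\ref{2.7}), so your argument is at the same level of rigor as the paper's.
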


\begin{proof}
Multiply $(\ref{2.3})_1$ by $\theta^{s-1}, s>1$ and integrate over $\Omega$. Then we obtain
$$\eqal{
&\frac{1}{s}\frac{d}{dt}|\theta|_{s,\Omega}^s+\frac{4\varkappa(s-1)}{s^2}\intop_\Omega|\nabla\theta^{s/2}|^2dx\cr
&=-\frac{1}{s}\intop_\Omega v\cdot\nabla\theta^sdx=-\frac{1}{s}\intop_{S_2}d\theta^sdS_2\cr
&\le\frac{1}{s}|d_1|_{\infty,S_2(-a)}\intop_{S_2(-a)}\theta^sdS_2\cr
&\le \frac{\varepsilon}{s}\intop_\Omega|\nabla\theta^{s/2}|^2dx+\frac{c}{\varepsilon s}|d_1|_{\infty,S_2(-a)}^2|\theta|_{s,\Omega}^s.\cr}
$$
For sufficiently small $\varepsilon$ we get
$$
\frac{1}{s}\frac{d}{dt}|\theta|_{s,\Omega}^s\le\frac{c}{s}|d_1|_{\infty,S_2(-a)}^2|\theta|_{s,\Omega}^s.
$$
Dropping $s$ and integrating with respect to time yield
$$
|\theta(t)|_{s,\Omega}^s\le\exp\bigg(c\intop_0^t|d_1(t')|_{\infty,S_2(-a)}^2dt'\bigg) |\theta(0)|_{s,\Omega}^s
$$
Using that $|\theta(0)|_{\infty,\Omega}\le\theta^*$ and passing with $s\to\infty$ the above inequality implies
$$
|\theta(t)|_{\infty,\Omega}\le\theta^*.
$$
Next we calculate the lower bound for $\theta$. Multiply $(\ref{2.3})_1$ by $\theta^{-s}$, $s>1$, and integrate over $\Omega$. Then, we have
$$\eqal{
&-\frac{1}{s-1}\frac{d}{dt}\intop_\Omega\frac{1}{\theta^{s-1}}dx-\frac{4s\varkappa}{(s-1)^2} \intop_\Omega\bigg|\nabla{1\over\theta^{(s-1)/2}}\bigg|^2dx\cr
&=\frac{1}{s-1}\intop_\Omega v\cdot\nabla\frac{1}{\theta^{s-1}}dx\cr
&=-\frac{1}{s-1}\intop_{S_2(-a)}d_1\frac{1}{\theta^{s-1}}dS_2+\frac{1}{s-1}\intop_{S_2(a)}d_2\frac{1}{\theta^{s-1}}dS_2.\cr}
$$
Multiplying by $-1$ we get
$$\eqal{
&\frac{1}{s-1}\frac{d}{dt}\intop_\Omega\frac{1}{\theta^{s-1}}dx+{4s\varkappa\over(s-1)^2} \intop_\Omega\bigg|\nabla{1\over\theta^{(s-1)/2}}\bigg|^2dx\cr
&\le\frac{1}{s-1}\intop_{S_2(-a)}d_1\frac{1}{\theta^{s-1}}dS_2\le\frac{1}{s-1}|d_1|_{\infty,S_2(-a)}\intop_{S_2}\frac{1}{\theta^{s-1}}dS_2\cr
&\le\frac{\varepsilon}{s-1}\intop_\Omega\bigg|\nabla\frac{1}{\theta^{(s-1)/2}}\bigg|^2dx+ \frac{c}{\varepsilon(s-1)}|d_1|_{\infty,S_2(-a)}^2\intop_\Omega\frac{1}{\theta^{s-1}}dS_2.\cr}
$$
For sufficiently small $\varepsilon$ we have
$$
\frac{1}{s-1}\frac{d}{dt}\intop_\Omega\frac{1}{\theta^{s-1}}dx\le\frac{c}{s-1}|d_1|_{\infty,S_2(-a)}^2\intop_\Omega\frac{1}{\theta^{s-1}}dx.
$$
Multiplying by $s-1$ and integrating with respect to time yield
$$
\bigg|\frac{1}{\theta}\bigg|_{s-1,\Omega}^{s-1}\le\exp \bigg(c\intop_0^t|d_1(t')|_{\infty,S_2(-a)}^2dt'\bigg)\bigg| \frac{1}{\theta(0)}\bigg|_{s-1,\Omega}^{s-1}.
$$
Passing with $s\to\infty$ gives
$$
{1\over|\theta|_{\infty,\Omega}}\le\frac{1}{|\theta(0)|_{\infty,\Omega}}\quad {\rm so}\quad \theta_*\le|\theta(t)|_{\infty,\Omega}.
$$
Hence (\ref{2.9}) is proved. This concludes the proof.
\end{proof}

Consider the elliptic problem
\begin{equation}\eqal{
&-\nu\divv\D(v)+\nabla p=f\cr
&\divv v=0\cr
&v\cdot\bar n|_{S_1}=0,\ \ \nu\bar n\cdot\D(v)\cdot\bar\tau_\alpha+\gamma v\cdot\bar\tau_\alpha|_{S_1}=0,\ \ \alpha=1,2,\cr
&v\cdot\bar n|_{S_2}=d,\ \ \bar n\cdot\D(v)\cdot\bar\tau_\alpha|_{S_2}=0,\ \ \alpha=1,2.\cr}
\label{2.10}
\end{equation}

\section{Energy estimate for weak solutions}\label{s4}

In this Section we prove the energy estimate for a weak solution to the corrected system where homogeneous Dirichlet boundary conditions on $S_2$, i.e. part of the boundary of domain $\Om$, were imposed. In order to have such solutions we use Hopf function and then a function $\delta$ such that $w=v-\delta$  satisfies $ w\cdot\bar n|_S=0$ so that in equations for $w$ - a weak solution to the system -  it is possible to integrate by parts. It turns out that we need some refined estimates in weighted Sobolev spaces for the function $\delta$ and for solutions to the auxiliary Neumann problem for the Poisson equation - this we have by results in Chapters~{10} and {11} of \cite{RZ2} and also \cite{RZ3}, \cite{RZ4}.

We consider the following system
\begin{equation}\eqal{
&v_t+v\cdot\nabla v-\nu\Delta v+\nabla p=\omega(\theta)f\quad &{\rm in}\ \ \Omega\times\R_+,\cr
&\divv v=0\quad &{\rm in}\ \ \Omega\times\R_+,\cr
&\nu\bar n\cdot\D(v)\cdot\bar\tau_\alpha+\gamma v\cdot\bar\tau_\alpha=0,\ \ \alpha=1,2\quad &{\rm on}\ \ S_1\times\R_+,\cr
&v\cdot\bar n=0\quad &{\rm on}\ \ S_1\times\R_+,\cr
&\bar n\cdot\D(v)\cdot\bar\tau_\alpha=0,\ \ \alpha=1,2,\quad &{\rm on}\ \ S_2\times\R_+,\cr
&v\cdot\bar n=d\quad &{\rm on}\ \ S_2\times\R_+,\cr
&v|_{t=0}=v_0\quad &{\rm in}\ \ \Omega,\cr}
\label{4.1}
\end{equation}
where $\theta$ is a given function.

To find an energy estimate, the integration by parts must be used. To perform this the Dirichlet boundary conditions in $(\ref{4.1})_6$ must be homogeneous. Therefore, following \cite{L1} and \cite{RZ2} we construct a function which makes boundary condition $(\ref{4.1})_6$ homogeneous. For this purpose we use the Hopf function 
\begin{equation}
\eta(\sigma;\varepsilon,\varrho)=\left\{\eqal{
&1\quad & 0\le\sigma\le\varrho e^{-1/\varepsilon}\equiv r,\cr
&-\varepsilon\ln{\sigma\over\varrho}\quad &r<\sigma\le\varrho,\cr
&0\quad &\varrho<\sigma<\infty.\cr}\right.
\label{4.2}
\end{equation}
We calculate
\begin{equation}
{d\eta\over d\sigma}=\eta'(\sigma;\varepsilon,\varrho)=\left\{\eqal{
&0\quad &0<\sigma\le r,\cr
&-{\varepsilon\over\sigma}\quad &r<\sigma\le\varrho,\cr
&0\quad &\varrho<\sigma<\infty,\cr}\right.
\label{4.3}
\end{equation}
so that $|\eta'(\sigma;\varepsilon,\varrho)|\le{\varepsilon\over\sigma}$. We define locally functions $\eta_i$, $i=1,2$, in internal neighborhoods of $S_2$ by setting
$$
\eta_i=\eta_i(\sigma_i;\varepsilon,\varrho),\ \ i=1,2,
$$
where $\sigma_i$ denotes a local coordinate defined on the small neighborhoods
\begin{equation}\eqal{
&S_2(a_1,\varrho)=\{x\in\Omega\colon x_3\in(-a,-a+\varrho)\},\cr
&S_2(a_2,\varrho)=\{x\in\Omega\colon x_3\in(a-\varrho,a)\},\cr
&S_2(\varrho)=S_2(a_1,\varrho)\cup S_2(a_2,\varrho),\cr}
\label{4.4}
\end{equation}
where $\sigma_1=-a+x_3$, $x_3\in(a,a+\varrho),$ $\sigma_2=a-x_3$, $x_3\in(a-\varrho,a)$ and then $\sigma_i$, $i=1,2$, are positive.

Next we set
\begin{equation}
\alpha=\sum_{i=1}^2\tilde d_i\eta_i,\quad b=\alpha\bar e_3,\quad \bar e_3=(0,0,1),
\label{4.5}
\end{equation}
where $\tilde d_i$ is an extension of $d_i$ on $\Omega$, $i=1,2$.

Then we define the function
\begin{equation}
u=v-b.
\label{4.6}
\end{equation}
Therefore
$$\eqal{
&\divv u=-\divv b=-\alpha_{x_3}\quad {\rm in}\ \ \Omega,\cr
&u\cdot\bar n|_S=0.\cr}
$$
After such modifications the boundary conditions for $u$ on $S_2$ are homogeneous but $u$ is not divergence free.

Let us rewrite the compatibility condition as follows
$$
\intop_\Omega\divv b\ dx=\intop_\Omega\alpha_{x_3}dx=-\intop_{S_2(-a)}\alpha|_{x_3=-a}d_1 S_2+ \intop_{S_2(a)}\alpha|_{x_3=a}d_2 S_2=0.
$$
In view of the above compatibility condition, we correct function $u$ defining a function $\varphi$ as a solution to the Neumann problem for the Poisson equation
\begin{equation}\eqal{
&\Delta\varphi=-\divv b\quad &{\rm in}\ \ \Omega,\cr
&\bar n\cdot\nabla\varphi=0\quad &{\rm on}\ \ S,\cr
&\intop_\Omega\varphi dx=0, \cr
& \int_{\Omega} \divv b = 0.\cr}
\label{4.7}
\end{equation}
Consequently, we set
\begin{equation}
w=u-\nabla\varphi=v-(b+\nabla\varphi)\equiv v-\delta.
\label{4.8}
\end{equation}
Therefore, $(w,p, \vartheta)$ is a solution to the problems
\begin{equation}\eqal{
&w_t+(w+\delta)\cdot\nabla w+w\cdot\nabla\delta-\divv\T(w,p)\cr
&=\omega(\vartheta)f-\delta_t-\delta\cdot\nabla\delta+\nu\divv\D(\delta)\equiv \omega(\vartheta)f+ F(\delta,t)\quad &{\rm in}\ \ \Omega^T,\cr
&\divv w=0\quad &{\rm in}\ \ \Omega^t,\cr
&w\cdot\bar n=0\quad &{\rm on}\ \ S^T,\cr
&\nu\bar n\cdot\D(w)\cdot\bar\tau_\alpha+\gamma w\cdot\bar\tau_\alpha=-\nu\bar n\cdot\D(\delta)\cdot\bar\tau_\alpha\cr
&\quad-\gamma\delta\cdot\bar\tau_\alpha\equiv B_{1\alpha}(\delta),\ \ \alpha=1,2\quad &{\rm on}\ \ S_1^T,\cr
&\bar n\cdot\D(w)\cdot\bar\tau_\alpha=-\bar n\cdot\D(\delta)\cdot\bar\tau_\alpha=B_{2\alpha}(\delta),\ \ \alpha=1,2\quad &{\rm on}\ \ S_2^T,\cr
&w|_{t=0}=v(0)-\delta(0)\equiv w(0), \quad &{\rm in}\ \ \Omega,\cr}
\label{4.9}
\end{equation}
\ben \bal \label{2-6}
\vartheta_t-\varkappa\Delta\vartheta+w\cdot\nabla\vartheta & = -\delta\cdot \nabla\vartheta\quad {\rm in}\ \ \Omega^T,\\
{\partial\vartheta\over\partial n} & =  0\quad {\rm on}\ \ S_2^T,\\
{\partial\vartheta\over\partial n} & =  0\quad {\rm on}\ \ S_1^T, \\
\vartheta|_{t=0}& =  \theta_0 =\vartheta_0,\quad {\rm in}\ \ \Omega,
\eal \een
where we used that $\divv\delta=0$. Next, we set
$$\eqal{
&\bar n|_{S_1}= {(\phi_{0,x_1},\phi_{0,x_2},0)\over\sqrt{\phi_{0,x_1}^2+\phi_{0,x_2}^2}}, \quad \bar\tau_1|_{S_1}={(-\phi_{0,x_2},\phi_{0,x_1},0)\over \sqrt{\phi_{0,x_1}^2+\phi_{0,x_2}^2}},\cr
&\bar\tau_2|_{S_1}=(0,0,1),\quad \bar n|_{S_2(-a)}=-\bar e_3,\quad \bar n|_{S_2(a)}=\bar e_3,\cr
&\bar\tau_1|_{S_2}=\bar e_1,\quad \bar\tau_2|_{S_2}=\bar e_2,\cr}
$$
where $\bar e_1=(1,0,0)$, $\bar e_2=(0,1,0)$.

Now, our goal is achieved for the function $w$: the Dirichlet boundary conditions for $w$ are homogeneous and $w$ is divergence free, so we can consider weak solutions to the problem (\ref{4.9}).

Next, introduce the spaces
$$\eqal{
&V_1=\{\psi\in H^1(\Omega):\ \divv\psi=0,\ \psi\cdot\bar n=0\ {\rm on}\ S\},\cr
&V_2=\{\phi\in H^1(\Omega):\ \phi\cdot\bar n=0\ {\rm on}\ S\}.\cr}
$$

\begin{definition}\label{d2-1}\qquad
$1^\circ$ Let $T>0$ be given. We call a function $(w,\vartheta)$ a weak solution to problem (\ref{4.9})--(\ref{2-6}) if $(w,\vartheta)\in(L_2(kT,(k+1)T;V_1)\cap L_\infty(kT,(k+1)T;L_2(\Omega)))\times(L_2(kT,(k+1)T;V_2)\cap L_\infty(kT,(k+1)T;L_2(\Omega)))$ for all $k\in\N_0\equiv\N\cup\{0\}$ and
\begin{equation}\eqal{
&{d\over dt}\intop_\Omega w\cdot\psi dx+\intop_\Omega H(w)\cdot\psi dx+\nu\intop_\Omega\D(w)\cdot\D(\psi)dx\cr
&\quad+\gamma\sum_{\alpha=1}^2\intop_{S_1}w\cdot\bar\tau_\alpha\psi\cdot \bar\tau_\alpha dS-\sum_{\alpha,\beta=1}^2\intop_{S_\beta}B_{\beta\alpha}\psi\cdot \bar\tau_\alpha dS\cr
&=\intop_\Omega \omega(\vartheta)f\cdot\psi dx+\intop_\Omega F(\delta,t)\cdot\psi dx\,\quad \forall\psi\in V_1,\cr}
\label{2-13}
\end{equation}
\begin{equation}\eqal{
&{d\over dt}\intop_\Omega\vartheta\phi dx+\varkappa\intop_\Omega\nabla \vartheta\cdot\nabla\phi dx+\intop_\Omega w\cdot\nabla\vartheta\phi dx\cr
&=-\intop_\Omega\delta\cdot\nabla\vartheta\phi dx dS\quad \forall\phi\in V_2,\cr
&w|_{t=kT}=w(kT),\cr
&\vartheta|_{t=kT}=\vartheta(kT)\cr}
\label{2-14}
\end{equation}
for all $k\in\N_0$, where $H(w)=w\cdot\nabla w+w\cdot\nabla\delta+\delta\cdot\nabla w$.

\noindent
$2^\circ$ We call a function $(v,\theta)$ a weak solution to problem (\ref{1.1}) if $(w,\vartheta)$ is a weak solution to problem (\ref{4.9})--(\ref{2-6}).
\end{definition}

\begin{lemma}\label{l3-1}
Let compatibility condition (\ref{1.11}) hold and $\omega f \in L_{6/5}(\Om).$ Let $d_i\in L_\infty(0,T;W_3^1(S_2(a_i)))\cap L_2(0,T;H^{1/2}(S_2(a_i))), i=1,2,$ $\theta(0)\in L_2(\Omega),$ $\omega f\in L_{6/5}(\Om^T)$,  $d_{it}\in L_2(kT,(k+1)T;W_{6/5}^{1/6}(S_2))$, $i=1,2$, $k\in\N_0$. Then every weak solution of problem (\ref{2-13})--(\ref{2-14}) satisfies for all $t\in[kT,(k+1)T]$, $k\in\N_0$ the estimate
\begin{equation}\eqal{
&{d\over dt}(\tilde c_1\|\vartheta\|_{L_2(\Omega)}^2+\|w\|_{L_2(\Omega)}^2)+\tilde c_2(\tilde c_1\|\vartheta\|_{H^1(\Omega)}^2+\|w\|_{H^1(\Omega)}^2)\cr
&\quad +\gamma\sum_{\alpha=1}^2\|w\cdot\bar\tau_\alpha\|_{L_2(S_1)}^2\le c[\|\om(\te)f\|_{L_{6/5}(\Omega)}^2\cr
&\quad+\psi(\|d\|_{W_{3,\infty(\Omega)}^1})(\|d\|_{W_2^{1/2}(S_2)}^2+\|d_t\|_{W_{6/5}^{1/6}(S_2)}^2)],\cr}
\label{3.3}
\end{equation}
where $\tilde c_i>0$, $i=1,2$, are constants and $\psi$ is a nonlinear increasing function.
\end{lemma}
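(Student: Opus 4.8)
The plan is to use $w$ as test function in (\ref{2-13}) and $\vartheta$ as test function in (\ref{2-14}), to estimate each resulting term, and then to add the two identities after multiplying the temperature one by a small constant $\tilde c_1$. Testing (\ref{2-13}) with $w$ produces $\frac12\frac{d}{dt}\|w\|_{L_2(\Om)}^2$, the dissipation $\nu\intop_\Om|\D(w)|^2dx$, and the nonnegative slip term $\gamma\sum_\alpha\|w\cdot\bar\tau_\alpha\|_{L_2(S_1)}^2$; by Korn's inequality, available because $w\cdot\bar n|_S=0$ and the cross-section is not a circle (which removes the rigid rotation from the kernel), the dissipation controls $c\|w\|_{H^1(\Om)}^2$ up to the $L_2$ norm. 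Testing (\ref{2-14}) with $\vartheta$ gives $\frac12\frac{d}{dt}\|\vartheta\|_{L_2(\Om)}^2+\varkappa\|\nb\vartheta\|_{L_2(\Om)}^2$. The self-convective terms $\intop_\Om(w\cdot\nb w)\cdot w\,dx$ and $\intop_\Om(w\cdot\nb\vartheta)\vartheta\,dx$ vanish since $\divv w=0$ and $w\cdot\bar n|_S=0$, while $\intop_\Om(\delta\cdot\nb w)\cdot w\,dx$ and $-\intop_\Om(\delta\cdot\nb\vartheta)\vartheta\,dx$ reduce, using $\divv\delta=0$ and $\delta\cdot\bar n=d$ on $S_2$, to the boundary integrals $\frac12\intop_{S_2}d|w|^2dS_2$ and $-\frac12\intop_{S_2}d\vartheta^2dS_2$.

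The step I expect to be the main obstacle is the remaining part of $H(w)$, namely $\intop_\Om(w\cdot\nb\delta)\cdot w\,dx$, because $\delta=b+\nb\varphi$ with $b=\sum_i\tilde d_i\eta_i\bar e_3$ and, by (\ref{4.3}), $|\eta_i'|\le\varepsilon/\sigma_i$, so $\nb\delta$ fails to lie in any $L_p(\Om)$ and instead blows up like $\varepsilon/\sigma_i$ near $S_2$. Splitting $\nb\delta=\nb b+\nb^2\varphi$, the contribution of $\nb b$ is dominated by $\intop_\Om w_3^2\,\partial_{x_3}\alpha\,dx$, which since $w_3|_{S_2}=0$ is controlled by a Hardy inequality together with $|\eta_i'|\le\varepsilon/\sigma_i$, giving a bound $\le c\varepsilon\varrho\,\|\tilde d\|_{W^1_{3,\infty}(\Om)}\|\nb w\|_{L_2(\Om)}^2$. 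For the $\nb^2\varphi$ contribution the plan is a weighted H\"older pairing: by the weighted elliptic estimate (\ref{2.2}) of Lemma~\ref{l2.2} one has $\|\nb^2\varphi\|_{L_{p,\mu}(\Om)}\le c\|\divv b\|_{L_{p,\mu}(\Om)}\le c\varepsilon\,\|\tilde d\|_{W^1_{3,\infty}(\Om)}$ for $\mu>1-1/p$, and this is paired against $|w|^2$ in the dual weighted norm with weight $\eta^{-\mu}$, which is estimated by $c\|w\|_{H^1(\Om)}^2$ through a weighted Sobolev--Hardy inequality built on $w\cdot\bar n|_{S_2}=0$. The delicate technical point, where the refined weighted estimates of \cite{RZ2,RZ3,RZ4} and the discussion of Remark~\ref{r3.3} are essential, is to choose the exponent $p$ and weight $\mu$ so that the weighted elliptic bound and the weighted embedding hold simultaneously. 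The outcome is $|\intop_\Om(w\cdot\nb\delta)\cdot w\,dx|\le c\varepsilon\,\psi(\|d\|_{W^1_{3,\infty}(\Om)})\|w\|_{H^1(\Om)}^2$, so that, fixing $\varepsilon$ (equivalently $\varrho$) small, this term is absorbed by $\nu\|w\|_{H^1(\Om)}^2$.

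The remaining terms are handled by standard tools. The boundary integrals $\frac12\intop_{S_2}d|w|^2dS_2$ and $-\frac12\intop_{S_2}d\vartheta^2dS_2$ are estimated, using $d\in L_\infty(S_2)$ from $W^1_3(S_2)\hookrightarrow L_\infty(S_2)$ and a trace interpolation $\|u\|_{L_2(S_2)}^2\le\varepsilon_1\|\nb u\|_{L_2(\Om)}^2+c_{\varepsilon_1}\|u\|_{L_2(\Om)}^2$, by small multiples of $\|w\|_{H^1(\Om)}^2$ and $\varkappa\|\nb\vartheta\|_{L_2(\Om)}^2$ plus $\|d\|$-dependent multiples of the $L_2$ norms that feed the Gronwall coefficient; keeping the full fractional trace norm yields the sharper factor $\|d\|_{W^{1/2}_2(S_2)}^2$. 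The boundary terms $\sum_{\beta,\alpha}\intop_{S_\beta}B_{\beta\alpha}w\cdot\bar\tau_\alpha dS$, linear in $\D(\delta)$ and $\delta$, and the forcing $\intop_\Om F(\delta,t)\cdot w\,dx$ with $F(\delta,t)=-\delta_t-\delta\cdot\nb\delta+\nu\divv\D(\delta)$, are bounded through trace and Sobolev embeddings and the representation (\ref{4.5}), (\ref{4.8}) of $\delta$ in terms of $\tilde d_i$, producing the factors $\|d\|_{W^{1/2}_2(S_2)}^2$, $\|d_t\|_{W^{1/6}_{6/5}(S_2)}^2$ and the nonlinear prefactor $\psi(\|d\|_{W^1_{3,\infty}(\Om)})$. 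Finally $\intop_\Om\om(\vartheta)f\cdot w\,dx\le\|\om(\vartheta)f\|_{L_{6/5}(\Om)}\|w\|_{L_6(\Om)}\le\varepsilon_2\|w\|_{H^1(\Om)}^2+c_{\varepsilon_2}\|\om(\vartheta)f\|_{L_{6/5}(\Om)}^2$ uses $H^1(\Om)\hookrightarrow L_6(\Om)$. Collecting all bounds, absorbing every gradient term into $\nu\|w\|_{H^1(\Om)}^2$ and $\varkappa\|\nb\vartheta\|_{L_2(\Om)}^2$, and choosing $\tilde c_1$ small enough to write the velocity and temperature inequalities as one (the coupling being mild since $\om(\vartheta)$ is bounded by Lemma~\ref{l2.4}), gives (\ref{3.3}).
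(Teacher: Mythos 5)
Your skeleton (testing with $w$ and $\vartheta$, Korn's inequality, absorbing into the dissipation, adding the two inequalities with a tuned constant) matches the paper, but the mechanism you propose for the decisive convective terms is different and, as written, does not prove (\ref{3.3}). You reduce $\intop_\Om(\de\cdot\nb w)\cdot w\,dx$ and $-\intop_\Om(\de\cdot\nb\vartheta)\vartheta\,dx$ to the boundary integrals $\tfrac12\intop_{S_2}d\,|w|^2dS_2$ and $-\tfrac12\intop_{S_2}d\,\vartheta^2dS_2$ and then apply the trace interpolation $\|u\|_{L_2(S_2)}^2\le\eps_1\|\nb u\|_{L_2(\Om)}^2+c_{\eps_1}\|u\|_{L_2(\Om)}^2$. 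Since the flux is \emph{large}, $\eps_1$ must be taken of order $\nu/\|d\|_{L_\infty(S_2)}$, and the leftover $c_{\eps_1}\|d\|_{L_\infty}\big(\|w\|_{L_2(\Om)}^2+\|\vartheta\|_{L_2(\Om)}^2\big)$ cannot be absorbed by $\tilde c_2(\|w\|_{H^1}^2+\tilde c_1\|\vartheta\|_{H^1}^2)$. You acknowledge this by saying these terms ``feed the Gronwall coefficient,'' but (\ref{3.3}) has \emph{no} $\|w\|_{L_2}^2$ or $\|\vartheta\|_{L_2}^2$ term on its right-hand side, and this is not cosmetic: Lemma~\ref{l3.2} iterates the $X$-free structure ${dX\over dt}+\tilde c_2Y\le F$ over the intervals $(kT,(k+1)T)$ to get a bound uniform in $k$ for a flux that need not decay; a differential inequality ${dX\over dt}+\tilde c_2Y\le F+C(d)X$ with large fixed $C(d)$ yields exponentially growing bounds instead. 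The paper avoids Gronwall terms entirely by estimating the convective terms in the bulk: $J_1=\intop_\Om b\cdot\nb w\cdot w\,dx$ gains the factor $\varrho^{1/6}$ from the small support of the Hopf cutoff, $J_2=\intop_\Om\nb\varphi\cdot\nb w\cdot w\,dx$ gains $\eps\varrho^{\mu-2/3}+\varrho^{\mu+1/3}$ through the weighted elliptic bound (\ref{3.11})--(\ref{3.12}), and the parameters are then fixed by (\ref{3.24}), $\eps\sim\nu/\|\tilde d\|_{W_{3,\infty}^1(\Om)}$, $\varrho\sim\eps^6$, so that the full coefficient of $\|w\|_{1,\Om}^2$ is at most $\nu/4$ \emph{whatever the size of $d$}; the exponential price $e^{c/\eps}=\exp(c\|\tilde d\|_{W_{3,\infty}^1})$ lands only on data norms, producing the function $\psi$. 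The temperature convection is handled the same way, via smallness of $\|\de\|_{L_3(\Om)}$ in (\ref{3-15})--(\ref{3-21}), again with no Gronwall term.

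There is a second concrete problem in your treatment of $\intop_\Om w\cdot\nb^2\varphi\cdot w\,dx$. Your plan pairs $\|\nb^2\varphi\|_{L_{3,\mu}(\Om)}$ against $\||w|^2\eta^{-\mu}\|_{L_{3/2}(\Om)}=\|\,|w|\,\eta^{-\mu/2}\|_{L_3(\Om)}^2$ and invokes a weighted Hardy--Sobolev inequality ``built on $w\cdot\bar n|_{S_2}=0$.'' But only the normal component $w_3$ vanishes on $S_2$; the tangential components do not, and for them $\|w_i\,\eta^{-\mu/2}\|_{L_3}$ with $\mu>2/3$ (the restriction forced by (\ref{3.12})) is not controlled by $\|w\|_{H^1(\Om)}$ --- without a vanishing trace the weight $\eta^{-3\mu/2}$ is non-integrable near $S_2$. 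The paper sidesteps this exactly where your plan gets stuck: in the term $J_4$ of (\ref{3.13}) it integrates by parts, using $\div w=0$ and $w\cdot\bar n|_S=0$, to convert $\intop_\Om w\cdot\nb\nb\varphi\cdot w\,dx$ into $-\intop_\Om w\cdot\nb w\cdot\nb\varphi\,dx=J_2$, so that only the \emph{unweighted} $L_3$ norm of the first derivative $\nb\varphi$ is ever needed, and the Hardy inequality is used only where it is legitimate, namely for $w_3/\sigma_i$ in $J_3$, see (\ref{3.14}). So the two ideas you must add are: (i) move the derivative off $\nb\varphi$ by integration by parts rather than pairing $\nb^2\varphi$ in dual weighted norms, and (ii) obtain smallness from the Hopf parameters $(\eps,\varrho)$ chosen as functions of $\|\tilde d\|_{W_{3,\infty}^1(\Om)}$, not from trace interpolation, which is what eliminates all lower-order $X$-terms from the right-hand side of (\ref{3.3}).
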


\begin{remark} \label{r3.3} In the proof below, we have to deal with nonlinear terms of the form $ \de  w \cdot \nb w$ and $\nb \de w \cdot w.$ Therefore, to absorb such terms by $\|w\|_{H^1(\Om^t)}$ on the l.h.s., we need some smallness. Obviously, $w \cdot \nb w,$ $w \cdot w$ are possibly not small, but $L_p$ norms of $\de, \nb \de$ with $ p\ge 3,$  are not small, either. Thus we are going to use weighted norms for components of $\de$ and $ \nb \de,$ presented in Lemma~\ref{l2.2}, for details see  (\ref{3.11}), (\ref{3.13}) and \cite{RZ3}, \cite{RZ3}. 
\end{remark}
\begin{proof}
Setting $\psi=w$ in (\ref{2-13}) and $\phi=\vartheta$ in (\ref{2-14}) as a test function and performing some integrations by parts we will obtain an inequality analogous to (\ref{3.10}) from \cite{RZ2}, i.e. we can derive an energy estimate for weak solutions to (\ref{4.9})-(\ref{2-6}). Namely, we multiply the first equation in (\ref{4.9}) by $\psi$, integrate by parts on $\Om$ and use the definition of $F.$ Then we obtain
\ben \label{3.8} \bal
&\qquad \frac{1}{2}\frac{d}{dt}| w|_{2,\Omega}^2+\int_\Omega ( w\cdot\nb\delta\cdot w+\delta\cdot\nb w\cdot w)dx \\ & \quad - \int_\Omega\div\T( w+\delta,p)\cdot w dx
 =\int_\Omega(\omega(\theta)f-\delta_{t}-\delta\cdot\nb\delta)\cdot w dx.
 \eal \een
 and 
\begin{equation}
{1\over2}{d\over dt}\|\vartheta\|_{L_2(\Omega)}^2+\varkappa \|\nabla\vartheta\|_{L_2(\Omega)}^2=-\intop_\Omega\delta\cdot\nabla\vartheta \vartheta dx.
\label{3-5}
\end{equation}
First, we want to estimate the right-hand side of (\ref{3.8}).
Applying boundary conditions $(\ref{4.9})_{4,5}$ the third integral on the l.h.s. of (\ref{3.8}) can be rewritten as follows
\benn
& & -\int_\Omega\div\T( w+\delta,p)\cdot w dx=-\int_\Omega\div [\nu\D( w+\delta)-p\I]\cdot w dx \\
& & =-\int_\Omega\div[\nu\D( w+\delta)]\cdot w dx+\int_\Omega\nb p \cdot w dx\\
& & =\nu\int_\Omega D_{ij}( w+\delta) w_{j,x_i}dx-\nu\int_\Omega \partial_{x_j}[D_{ij}( w+\delta) w_i]dx+\int_\Omega\div(p w)dx\\
& & \equiv I.
\eenn
and first term in $I$ takes the form
$$
\frac{\nu}{2}\big|D_{ij}(w)\big|_{2,\Omega}^2+\nu\int_\Omega D_{ij}(\delta) w_{j,x_i}dx,
$$
with the summation over repeated indices. By the Green theorem the second term in $I$ reads
\benn
& & -\nu\int_{S_1}n_jD_{ij}( w+\delta) w_idS_1- \nu\int_{S_2}n_jD_{ij} ( w+\delta) w_idS_2\\
& & =-\nu\int_{S_1}n_jD_{ij}( w+\delta)( w_{\tau_\alpha}\tau_{\alpha i}+ w_nn_i)dS_1\\
& & -\nu\int_{S_2}n_jD_{ij}( w+\delta)( w_{\tau_\alpha} \tau_{\alpha i}+ w_nn_i)dS_2\\
& & =\gamma\int_{S_1}(| w_{\tau_\alpha}|^2+ w_{\tau_\alpha} \delta_{\tau_\alpha})dS_1,
\eenn
where $ w_{\tau_\alpha}= w\cdot\bar\tau_\alpha$, $\alpha=1,2$, $ w_n= w\cdot\bar n$.

\noindent
Consequently the last term in $I$ vanishes in view of the Green theorem and $(\ref{4.9})_3$.

With the Korn inequality, we conclude 
\begin{equation}\eqal{
&\frac{1}{2}\frac{d}{dt}|w|_{2,\Omega}^2+ \nu\|w\|_{1,\Omega}^2+\gamma\sum_{\alpha=1}^2|w\cdot\bar\tau_\alpha|_{2,S_1}^2\cr
&\le-\intop_\Omega(w\cdot\nabla\delta\cdot w+\delta\cdot\nabla w\cdot w)dx+c\sum_{\alpha=1}^2|\delta\cdot\bar\tau_\alpha|_{2,S_1}^2\cr
&\quad+c|\D(\delta)|_{2,\Omega}^2+\intop_\Omega(\omega(\theta)f-\delta_t- \delta\cdot\nabla\delta)\cdot wdx.\cr}
\label{3.9}
\end{equation}
The most challenging terms are those caused by nonlinearity $w \cdot \nb w$ and $w \cdot w$ so let us concentrate first on the integral
\ben \label{3.10} \bal
\int_{\Omega}\delta\cdot\nb w\cdot w dx=\int_\Omega (b+\nb\varphi)\cdot\nb w\cdot w dx\\
=\int_{\Omega} b\cdot\nb w\cdot w dx+\int_\Omega\nb\varphi\cdot\nb w \cdot w dx\equiv J_1+J_2. \eal \een
Using the definition of $b$ and H\"older inequality, we estimate $J_1$ by
\benn
|J_1|&\le|\nb w|_{2,\Omega}| w|_{6,\Omega}|b|_{3,\Omega}\le c\| w\|_{1,\Omega}^2|b|_{3,\tilde S_2(\varrho)}\le c\varrho^{1/6} \| w\|_{1,\Omega}^2|b|_{6,\tilde S_2(\varrho)}\\
&\le c\varrho^{1/6}\| w\|_{1,\Omega}^2\|\tilde d\|_{1,\Omega},
\eenn
where $\tilde S_2(\varrho)=S_2(a_1,\varrho)\cup S_2(a_2,\varrho)$.
Next, we estimate $J_2$ in the following way
$$
|J_2|\le|\nb\varphi|_{3,\Omega}| w|_{6,\Omega}|\nb w|_{2,\Omega},
$$
where we estimate $L_3$ norm using weighted spaces as follows
\ben \label{3.11} \bal
|\nb\varphi|_{3,\Omega}&\le c\|\nb\varphi\|_{L_{3,-\mu'}(\Omega)}\le c\|\nb_{x_3}\nb\varphi\|_{L_{3,1-\mu'}(\Omega)}\le c\|\varphi\|_{L_{3,1-\mu'}^2(\Omega)}\\
&\le c\|\div b\|_{L_{3,1-\mu'}(\Omega)},
 \eal \een
with $\mu'>0$. Here the results of Chapters {10} and {11} of \cite{RZ2} (see also \cite{RZ4}) are applied.
Let
\benn
\tilde S'_2(a_1,\varrho) & = & \{x\in\Omega:x_3\in(-a+r,-a+\varrho)\},\\
\tilde S'_2(a_2,\varrho) & = & \{x\in\Omega: x_3\in(a-\varrho,a-r)\}.
\eenn
Using the definition of $b$ and properties of function $\eta,$ for $\mu=1-\mu'$ we obtain the estimates
\ben \bal
\|\div b\|_{L_{3,\mu}(\Omega)}&\le c\varepsilon\bigg(\sum_{i=1}^2
\int_{\tilde S'_2(a_i,\varrho)}|\tilde d_i|^3\frac{\sigma_i^{3\mu}}{\sigma_i^3}dx\bigg)^{1/3}\\
&\quad+\bigg(\sum_{i=1}^2\int_{\tilde S_2(a_i,\varrho)}|\tilde d_{i,x_3}|^3|\sigma_i(x)|^{3\mu}dx\bigg)^{1/3}\\
&\le c\sum_{i=1}^2\varepsilon\bigg(\sup_{x_3}\int_{S_2(a_i,\varrho)}|\tilde d_i|^3dx'\int_r^\varrho \frac{\sigma_i^{2\mu}}{\sigma_i^3} d\sigma_i\bigg)^{1/3}\\
&\quad+\sum_{i=1}^2\bigg(\sup_{x_3}\int_{S_2(a_i,\varrho)}|\tilde d_{i,x_3}|^3dx' \intop_0^\varrho\sigma_i^{3\mu}d\sigma_i\bigg)^{1/3}\\
&\le c\varepsilon\varrho^{\mu-2/3}\sum_{i=1}^2\sup_{x_3}|\tilde d_i|_{3,S_2(a_i)}+c\varrho^{\mu+1/3}\sum_{i=1}^2\sup_{x_3}|\tilde d_{i,x_3}|_{3,S_2(a_i)},
\label{3.12} \eal \een
where $\sigma_i=\dist\{S_2(a_i),x\}$, $x\in\tilde S_2(a_i,\varrho)$.

\noindent
Note that the above inequality is true under the restriction $\mu>\frac{2}{3}$ because for $\mu=\frac{2}{3}$ the first integral on the r.h.s. of the above inequalities is not finite.

Observe that, due to the definition of the weighted spaces used in (\ref{3.11}), the weight is $x_3^{2\mu}$, $3\mu>2$. Therefore, this is not the Muckenhoupt weight, so for the estimate in (\ref{3.11}) of the singular operator see Chapters {10}, {11} of \cite{RZ2} and \cite{RZ3, RZ4}.

Then
$$
|J_2|\le c[\varepsilon\varrho^{\mu-2/3}\sum_{i=1}^2\sup_{x_3}|\tilde d_i|_{3,S_2(a_i)}+ \varrho^{\mu+1/3}\sum_{i=1}^2\sup_{x_3}|\tilde d_{i,x_3}|_{3,S_2(a_i)}]\| w\|_{1,\Omega}^2 \equiv E^2.
$$
Next, we analyze the term
\begin{equation}
\int_\Omega w\cdot\nb\delta\cdot w dx= \int_\Omega w\cdot\nb b\cdot w dx+ \int_\Omega w\cdot\nb\nb\varphi\cdot w dx\equiv J_3+J_4.
\label{3.13}
\end{equation}
For $J_4$, we have
$$
|J_4|=\bigg|\int_\Omega\div( w\cdot\nb\varphi w)dx-\int_\Omega  w\cdot\nb w\cdot\nb\varphi dx\bigg|=\bigg|\int_\Omega w\cdot\nb w\cdot\nb\varphi dx\bigg|\le E^2
$$
because the second term in $J_4$ is equal to $J_2$ and the first vanishes in view of $(\ref{4.9})_3$.

We find also a following bound for $J_3,$ using $b=\alpha\bar e_3=\sum_{i=1}^2\tilde d_i\eta_i\bar e_3$:

\ben \bal
|J_3|&=\bigg|\sum_{i=1}^2\int_{\tilde S_2(a_i,\varrho)} w\cdot\nb (\tilde d_i\eta_i) w_3dx\bigg|\\
&=\bigg|\sum_{i=1}^2\int_{\tilde S_2(a_i,\varrho)}( w\cdot\nb\tilde d_i \eta_i w_3+ w\cdot\nb\eta_i\tilde d_i w_3)dx\bigg|\\
&\le\sum_{i=1}^2\bigg(\int_{\tilde S_2(a_i,\varrho)}| w\cdot\nb\tilde d_i\eta_i|\,| w_3|dx\\
&\quad+\varepsilon\int_{\tilde S_2(a_i,\varrho)} \bigg|\frac{w_3}{\sigma_i} w_3\tilde d_i\bigg|\supp\eta'_id\sigma_idx_1dx_2\bigg)\\
&\le c\sum_{i=1}^2| w|_{6,\tilde S_2(a_i,\varrho)}| w_3|_{3,\tilde S_2(a_i,\varrho)}|\nb\tilde d_i|_{2,\tilde S_2(a_i,\varrho)}\\
&\quad+c\varepsilon\sum_{i=1}^2| w_3|_{6,\tilde S_2(a_i,\varrho)}
|\tilde d_i|_{3,\tilde S_2(a_i,\varrho)}\bigg(\int_{S_2(a_i)}dx_1dx_2 \int_r^\varrho d\sigma_i\bigg|\frac{w_3}{\sigma_i}\bigg|^2\bigg)^{1/2}\\
&\le c\varrho^{1/6}\sum_{i=1}^2| w|_{6,\tilde S_2(a_i,\varrho)}^2
|\nb\tilde d_i|_{2,\tilde S_2(a_i,\varrho)}\\
&\quad+c\varepsilon\sum_{i=1}^2| w|_{6,\tilde S_2(a_i,\varrho)}|\nb w_3|_{2,\tilde S_2(a_i,\varrho)}
|\tilde d_i|_{3,\tilde S_2(a_i,\varrho)}\\
&\le c(\varrho^{1/6}+\varepsilon)\| w\|_{1,\Omega}^2
\|\tilde d\|_{1,3,\Omega}.
\label{3.14} \eal \een
Summarizing the estimates for $J_1$--$J_4$ we infer that the nonlinear term in (\ref{3.9}) can be estimated by
\ben \bal
& &\bigg|\int_\Omega( w\cdot\nb\delta\cdot w+\delta\cdot\nb w \cdot w)dx\bigg|\le c(\varepsilon\varrho^{\mu-2/3}\sum_{i=1}^2\sup_{x_3}|\tilde d_i|_{3,S_2(a_i)}\\
& & \quad+\varrho^{\mu+1/3}\sum_{i=1}^2\sup_{x_3}|\tilde d_{i,x_3}|_{3,S_2(a_i)}+(\varrho^{1/6}+ \varepsilon)\|\tilde d\|_{1,3,\Omega}+\varrho^{1/6}\|\tilde d\|_{1,\Omega}) \| w\|_{1,\Omega}^2.
\label{3.15} \eal \een
Consequently, we analyze the second term on the r.h.s. of (\ref{3.9}),
\benn
\sum_{\alpha=1}^2|\delta\cdot\bar\tau_\alpha|_{2,S_1}^2\le\sum_{\alpha=1}^2 (|b\cdot\bar\tau_\alpha|_{2,S_1}^2+ |\nb\varphi\cdot\bar\tau_\alpha|_{2,S_1}^2)\\
\le|\alpha|_{2,S_1}^2+c\|\nb\varphi\|_{1,3/2,\Omega}^2\le\sum_{i=1}^2 |d_i|_{2,S_1(a_i)}^2+c|\div b|_{3/2,\Omega}^2\\
\le c\|\tilde d\|_{1,3/2,\Omega}^2+c\sum_{i=1}^2
|\nb(\tilde d_i\eta_i)|_{3/2,\Omega}^2\\
\le c\|\tilde d\|_{1,3/2,\Omega}^2+c\sum_{i=1}^2(|\nb\tilde d_i\eta_i|_{3/2,\Omega}^2+|\tilde d_i\nb\eta_i|_{3/2,\Omega}^2)\\
\le c\|\tilde d\|_{1,3/2,\Omega}^2+c\sum_{i=1}^2
|\tilde d_i\nb\eta_i|_{3/2,\Omega}^2.
\eenn
Here, we estimate the last expression above in more detail
\benn
\sum_{i=1}^2|\tilde d_i\nb\eta_i|_{3/2,\Omega}^2\le\varepsilon^2\bigg[\bigg( \intop_{-a+r}^{-a+\varrho}dx_3\int_{S_2(a_1)}dx'
\bigg|\frac{\tilde d_1}{a+x_3}\bigg|^{3/2}\bigg)^{4/3}\\
\quad+\bigg(\int_{a-\varrho}^{a-r}dx_3\int_{S_2(a_2)}dx'
\bigg|\frac{\tilde d_2}{a-x_3}\bigg|^{3/2}\bigg)^{4/3}\bigg]\\
\le\varepsilon^2\bigg[\sup_{x_3}|\tilde d_1|_{3/2,S_2(a_1)}^2\bigg( \intop_{-a+r}^{-a+\varrho}\bigg|\frac{1}{a+x_3}\bigg|^{3/2}dx_3\bigg)^{4/3}\\
\quad+\sup_{x_3}|\tilde d_2|_{3/2,S_2(a_2)}^2\bigg(\int_{a-\varrho}^{a-r} \bigg|\frac{1}{a-x_3}\bigg|^{3/2}dx_3\bigg)^{4/3}\bigg]\\
\le c\varepsilon^2\sum_{i=1}^2 \sup_{x_3}|\tilde d_i|_{3/2,S_2(a_i)}^2\bigg(\int_r^\varrho \frac{dy}{ y^{3/2}}\bigg)^{4/3}\le c\varepsilon^2\sum_{i=1}^2\sup_{x_3}|\tilde d_i|_{3/2,S_2(a_i)}^2 [r^{-1/2}-\varrho^{-1/2}]^{4/3}\\
\le c\varepsilon^2\sum_{i=1}^2\sup_{x_3}|\tilde d_i|_{3/2,S_2(a_i)}^2\frac{1}{\varrho^{2/3}} \left[\exp\left(\frac{1}{2\varepsilon}\right)-1\right]^{4/3}\le c\frac{\varepsilon^2}{\varrho^{2/3}} \exp\left(\frac{2}{3\varepsilon}\right)\sum_{i=1}^2\sup_{x_3}|\tilde d_i|_{3/2,S_2(a_i)}^2.
\eenn
Concluding from the inequalities above, we have
\begin{equation}
\sum_{\alpha=1}^2|\delta\cdot\bar\tau_\alpha|_{2,S_1}^2\le c\|\tilde d\|_{1,3/2,\Omega}^2+c\frac{\varepsilon^2}{\varrho^{2/3}}
\exp\left(\frac{2}{3\varepsilon}\right) \sum_{i=1}^2\sup_{x_3}|\tilde d_i|_{3/2,S_2(a_i)}^2.
\label{3.16}
\end{equation}
Next, we estimate the term
\ben \bal
|\D(\delta)|_{2,\Omega}^2&\le|\D(b)|_{2,\Omega}^2+|\D(\nb\varphi)|_{2,\Omega}^2\\
&\le\sum_{i=1}^2(|\nb\tilde d_i\eta_i|_{2,\Omega}^2+|\tilde d_i\nb\eta_i|_{2,\Omega}^2)+|\nb^2\varphi|_{2,\Omega}^2\\
&\le c\sum_{i=1}^2(|\nb\tilde d_i\eta_i|_{2,\Omega}^2+|\tilde d_i\nb\eta_i|_{2,\Omega}^2)\\
&\le c\sum_{i=1}^2\|\tilde d_i\|_{1,2,\Omega}+c\varepsilon^2\int_{-a+r}^{-a+\varrho}dx_3 \intop_{S_2(a_1)}dx'\bigg|\frac{\tilde d_1}{a+x_3}\bigg|^2\\
&\quad+\varepsilon^2c \int_{a-\varrho}^{a-r}dx_3\int_{S_2(a_2)}dx'\bigg|\frac{\tilde d_2}{a-x_3}\bigg|^2\\
&\le c\sum_{i=1}^2\bigg(\|\tilde d_i\|_{1,2,\Omega}^2+\varepsilon^2\sup_{x_3} |\tilde d_i|_{2,S_2(a_i)}^2\int_r^\varrho\frac{dy}{y^2}\bigg)\\
&\le c\sum_{i=1}^2\bigg[\|\tilde d_i\|_{1,2,\Omega}^2+\varepsilon^2\sup_{x_3}|\tilde d_i|_{2,S_2(a_i)}^2\bigg(\frac{1}{r}-\frac{1}{\varrho}\bigg)\bigg]\\
&\le c\sum_{i=1}^2\left[\|\tilde d_i\|_{1,2,\Omega}^2+\varepsilon^2\sum_{i=1}^2\sup_{x_3}|\tilde d_i|_{2,S_2(a_i)}^2\frac{1}{\varrho}\left(\exp\left(\frac{1}{\varepsilon}\right)-1\right)\right]\\
&\le c\sum_{i=1}^2[\|\tilde d_i\|_{1,2,\Omega}^2+\frac{\varepsilon^2}{\varrho} e^{1/\varepsilon}\sup_{x_3}|\tilde d_i|_{2,S_2(a_i)}^2].
\label{3.17} \eal \een
For the last integral on the r.h.s. of (\ref{3.9}) we infer
\benn
\int_\Omega(\omega(\theta)f-\delta_t-\delta\cdot\nb\delta)\cdot w dx\le\varepsilon_1| w|_{6,\Omega}^2+c(1/\varepsilon_1)(|\omega(\theta)f|_{6/5,\Omega}^2+ |\delta_t|_{6/5,\Omega}^2) \\
+\bigg|\int_\Omega\delta\cdot\nb\delta\cdot  w dx\bigg|.
\eenn
We consider norm of $\de_t$ as follows
\ben \bal
|\delta_t|_{6/5,\Omega}&=|b_t+\nb\varphi_t|_{6/5,\Omega}\le|\tilde d_t|_{6/5,\Omega}+|\div b_t|_{6/5,\Omega}\\
&\le|\tilde d_t|_{6/5,\Omega}+|\nb\tilde d_t|_{6/5,\Omega}+|\tilde d_t\nb\eta|_{6/5,\Omega}\le\|\tilde d_t\|_{1,6/5,\Omega}\\
&\quad+\varepsilon\sum_{i=1}^2 \sup_{x_3}|\tilde d_{i,t}|_{6/5,S_2(a_i)}\bigg(\int_r^\varrho \frac{dx_3}{x_3^{6/5}}\bigg)^{5/6}\\
&\le\|\tilde d_t\|_{1,6/5,\Omega}+c\frac{\varepsilon}{\varrho^{1/6}} e^{1/6\varepsilon}\sum_{i=1}^2\sup_{x_3}|\tilde d_{i,t}|_{6/5,S_2(a_i)},
\label{3.18} \eal \een
where we applied that
$$
\bigg(\int_r^\varrho\frac{dx_3}{x_3^{6/5}}\bigg)^{5/6}=
\bigg(5\frac{1}{x_3^{1/5}}\bigg|_\varrho^r\bigg)^{5/6}=5^{5/6}
\bigg(\frac{1}{r^{1/5}}-\frac{1}{\varrho^{1/5}}\bigg)^{5/6}=
\frac{5^{5/6}}{\varrho^{1/6}}(e^{1/5\varepsilon}-1)^{5/6}.
$$
Finally, we analyze
\ben \bal
& & \bigg|\int_\Omega\delta\cdot\nb\delta\cdot w dx\bigg|\le |\nb\delta|_{2,\Omega}|\delta|_{3,\Omega}| w|_{6,\Omega}\le\varepsilon_2 | w|_{6,\Omega}^2+c(1/\varepsilon_2)\|\delta\|_{1,2,\Omega}^4\\
& & \le\varepsilon_2| w|_{6,\Omega}^2+c(1/\varepsilon_2)(\|\tilde d\|_{1,2,\Omega}^4+\frac{\varepsilon^4}{\varrho^2}e^{2/\varepsilon}\sum_{i=1}^2\sup_{x_3} |\tilde d_i|_{2,S_2(a_i)}^4).
\label{3.19} \eal \een
Summarizing the above estimates in (\ref{3.9}) gives
\ben \bal
&\frac{1}{2}\frac{d}{dt}| w|_{2,\Omega}^2+\nu\| w\|_{1,\Omega}^2+\gamma \sum_{\alpha=1}^2| w\cdot\bar\tau_\alpha|_{2,S_1}^2\\
&\le c\| w\|_{1,\Omega}^2[\varepsilon\varrho^{\mu-2/3}\sum_{i=1}^2\sup_{x_3}|\tilde d_i|_{3,S_2(a_i)}+\varrho^{\mu+1/3}\sum_{i=1}^2\sup_{x_3}|\tilde d_{i,x_3}|_{3,S_2(a_i)}\\
&\quad+(\varrho^{1/6}+\varepsilon)\|\tilde d\|_{1,3,\Omega}]\\
&\quad+c[|\omega(\theta)f|_{6/5,\Omega}^2+\|\tilde d\|_{1,\Omega}^4+\|\tilde d\|_{1,\Omega}^2+ \|\tilde d_t\|_{1,6/5,\Omega}^2\\
&\quad+\frac{\varepsilon^2}{\varrho}e^{1/\varepsilon}\sum_{i=1}^2\sup_{x_3}|\tilde d_i|_{2,S_2(a_i)}^2+\frac{\varepsilon^4}{\varrho^2}e^{2/\varepsilon}\sum_{i=1}^2\sup_{x_3}|\tilde d_i|_{2,S_2(a_i)}^4\\
&\quad+\frac{\varepsilon^2}{\varrho^{2/3}}e^{2/3\varepsilon}\sum_{i=1}^2\sup_{x_3} |\tilde d_i|_{3/2,S_2(a_i)}^2+\frac{\varepsilon^2}{\varrho^{1/3}}e^{1/3\varepsilon}\sup_t|\tilde d_t|_{6/5,S_2}^2].
\label{3.20} \eal \een
The norms of $\tilde d$ under the first square bracket on the r.h.s. of (\ref{3.20}) are bounded by
$$
\|\tilde d\|_{W_{3,\infty}^1(\Omega)}
$$
and the first three norms of $\tilde d$ under the second square bracket are bounded by the quantities
$$
\|\tilde d\|_{1,\Omega},\quad \|\tilde d_t\|_{1,6/5,\Omega}.
$$
with the following norms of anisotropic Sobolev spaces
\benn
\|u\|_{W_{p_1,p_2}^1(\Omega)}=\bigg(\int_{-a}^a\bigg(\int_{S_2} \sum_{|\alpha|\le1}|D_x^\alpha u|^{p_1}dx_1dx_2\bigg)^{p_2/p_1}dx_3\bigg)^{1/p_2},\\
\|u\|_{L_{p_1,p_2}(\Omega)}=\bigg[\bigg(\int_{S_2}|u|^{p_1}dx_1dx_2 \bigg)^{p_2/p_1}dx_3\bigg]^{1/p_2},
\eenn
where $p_1,p_2\in[1,\infty]$.

Then, we reformulate (\ref{3.20}) as follows
\ben \bal
&\frac{1}{2}\frac{d}{dt}| w|_{2,\Omega}^2+\nu\| w\|_{1,\Omega}^2+\gamma \sum_{\alpha=1}^2| w\cdot\bar\tau_\alpha|_{2,S_1}^2\\
&\le c\| w\|_{1,\Omega}^2[(\varepsilon\varrho^{\mu-2/3}+\varrho^{\mu+1/3}+ \varrho^{1/6}+\varepsilon)\|\tilde d\|_{W_{3,\infty}^1(\Om)}]\\
&\quad+c\bigg[|\omega(\te)f|_{6/5,\Omega}^2+\|\tilde d\|_{1,\Omega}^2+\|\tilde d\|_{1,\Omega}^4+ \|\tilde d_t\|_{1,6/5,\Omega}^2\\
&\quad+\bigg(\frac{\varepsilon^2}{\varrho}e^{1/\varepsilon}+ \frac{\varepsilon^2}{\varrho^{2/3}}e^{2/3\varepsilon}\bigg)\|\tilde d\|_{L_{2,\infty}(\Omega)}^2+\frac{\varepsilon^4}{\varrho^2}e^{2/\varepsilon} \|\tilde d\|_{L_{2,\infty}(\Omega)}^4\\
&\quad+\frac{\varepsilon^2}{\varrho^{1/3}}e^{1/3\varepsilon}\|\tilde d_t\|_{L_{6/5,\infty}(\Omega)}^2\bigg].
\label{3.21} \eal \een
We demand on parameters that $\mu>\frac{2}{3}$ and $\varrho<1$ so that $\varrho^{\mu+1/3}<\varrho^{1/6}$.
Consequently, the expression under the first square bracket in (\ref{3.21}) can be estimated by
$$
(\varepsilon+\varrho^{1/6})\|\tilde d\|_{W_{3,\infty}^1(\Omega)}.
$$
We set
\begin{equation}
\varepsilon=\frac{\nu}{8c\|\tilde d\|_{W_{3,\infty}^1(\Omega)}},\quad \varrho=\bigg(\frac{\nu}{8c\|\tilde d\|_{W_{3,\infty}^1(\Omega)}}\bigg)^6.
\label{3.24}
\end{equation}
Then, it follows
\begin{equation}
c(\varepsilon+\varrho^{1/6})\|\tilde d\|_{W_{3,\infty}^1(\Omega)}\le\frac{\nu}{4}
\label{3.22}
\end{equation}
and infer from (\ref{3.21}) the relation
\ben \bal
&\frac{d}{dt}| w|_{2,\Omega}^2+\nu\| w\|_{1,\Omega}^2+2\gamma \sum_{\alpha=1}^2| w\cdot\bar\tau_\alpha|_{2,S_1}^2\\
&\le c[|\omega(\te)f|_{6/5,\Omega}^2+\|\tilde d\|_{1,\Omega}^2+\|\tilde d\|_{1,\Omega}^4+ \|\tilde d_t\|_{1,6/5,\Omega}^2]\\
&\quad+c\bigg[\bigg(\frac{\varepsilon^2}{\varrho}e^{1/\varepsilon}+ \frac{\varepsilon^2}{\varrho^{2/3}}e^{2/3\varepsilon}\bigg)
\|\tilde d\|_{L_{2,\infty}(\Omega)}^2\\
&\quad+\frac{\varepsilon^4}{\varrho^2}e^{2/\varepsilon}
\|\tilde d\|_{L_{2,\infty}(\Omega)}^4+\frac{\varepsilon^2}{\varrho^{1/3}}e^{1/3} \|\tilde d_t\|_{L_{6/5,\infty,\Omega}}^2\bigg].
\label{3-23} \eal \een

Then (\ref{3-23}) can be reformulated as follows 
\ben \bal
& & \frac{d}{dt}| w|_{2,\Omega}^2+\nu\| w\|_{1,\Omega}^2+2\gamma\sum_{\alpha=1}^2 | w\cdot\bar\tau_\alpha|_{2,S_1}^2\\
& & \le c[|\omega(\te)f|_{6/5,\Omega}^2+\|\tilde d\|_{1,\Omega}^2+\|\tilde d\|_{1,\Omega}^4+ \|\tilde d_t\|_{1,6/5,\Omega}^2]\\
& &\quad+c\bigg[(1+\|\tilde d\|_{W_{3,\infty}^1(\Omega)}^2)
\|\tilde d\|_{W_{3,\infty}^1(\Omega)}^4\exp\bigg(\frac{1}{c}
\|\tilde d\|_{W_{3,\infty}^1(\Omega)}^2\bigg)\\
& & \quad+\|\tilde d\|_{W_{3,\infty}^1(\Omega)}^{10}\exp\bigg(\frac{1}{c}
\|\tilde d\|_{W_{3,\infty}^1(\Omega)}\bigg)+\exp\bigg(\frac{1}{c}
\|\tilde d\|_{W_{3,\infty}^1(\Omega)}\bigg)
\|\tilde d_t\|_{L_{6/5,\infty,\Omega}}^2\bigg] \\
& & \le \bar c_1[\|\om(\te)f\|_{L_{6/5}(\Omega)}^2+\|\vartheta\|_{L_2(\Omega)}^2 \|\om(\te)f\|_{L_3(\Omega)}^2\\
& & \quad+\psi(\|\bar d\|_{W_{3,\infty}^1(\Omega)})(\|\bar d\|_{H^1(\Omega)}^2+\|\bar d_t\|_{L_{6/5,\infty}(\Omega)}^2)],
\label{3-25}  \eal \een
Now, we consider the terms on the right-hand side of the heat equation (\ref{3-5}). Using the H\"older and Poincar\'e inequalities we get
\begin{equation}\eqal{
&\bigg|\intop_\Omega\delta\cdot\nabla\vartheta\vartheta dx\bigg|\le \|\delta\|_{L_3(\Omega)}\|\nabla\vartheta\|_{L_2(\Omega)} \|\vartheta\|_{L_6(\Omega)}\cr
&\le c\|\delta\|_{L_3(\Omega)}\|\nabla\vartheta\|_{L_2(\Omega)}^2\le c(\|b\|_{L_3(\Omega)}+\|\nabla\phi\|_{L_3(\Omega)}) \|\nabla\vartheta\|_{L_2(\Omega)}^2.\cr}
\label{3-15}
\end{equation}
To consider the norm $\|\delta\|_{L_3(\Omega)}$ we use that $\delta=b+\nabla\phi$, where $b$ is given by (\ref{2.1})--(\ref{2.2}) and $\phi$ is a solution of (\ref{2.3}). First, we use the H\"older inequality
$$
\|b\|_{L_3(\Omega)}^3\le c\varrho^{1/2}\sum_{i=1}^2\|\bar d_i\|_{L_6(\Omega)}^3.
$$
Next, by (\ref{3.11}) we have
\begin{equation}
\|\nabla\phi\|_{L_3(\Omega)}\le c\|\divv b\|_{L_{3,1-\mu'}(\Omega)},
\label{3-16}
\end{equation}
where ${2\over3}<1-\mu'\le1$.

\noindent
Let $\mu=1-\mu'$. Then (\ref{3.12}) yields
\begin{equation}\eqal{
&\|\divv b\|_{L_{3,\mu}(\Omega)}\cr
&\le c(\varepsilon\varrho^{\mu-{2\over3}}\sup_{x_3}\|\bar d\|_{L_3(S_2)}+ \varrho^{\mu+{1\over3}}\sup_{x_3}\|\bar d_{x_3}\|_{L_3(S_2)}).\cr}
\label{3-17}
\end{equation}
Using (\ref{3-15}) in (\ref{3-5}), and assuming that $\varepsilon_4$ is sufficiently small we obtain
\begin{equation}\eqal{
&{1\over2}{d\over dt}\|\vartheta\|_{L_2(\Omega)}^2+{\varkappa\over2} \|\nabla\vartheta\|_{L_2(\Omega)}^2\le c\|\nabla\vartheta\|_{L_2(\Omega)}^2\cr
&\quad\cdot(\varrho^{1/6}\|\bar d\|_{L_6(\Omega)}+\varepsilon\varrho^{\mu-{2\over3}}\sup_{x_3}\|\bar d\|_{L_2(S_2)}+\varrho^{\mu+{1\over3}}\sup_{x_3}\|\bar d_{x_3}\|_{L_3(S_2)}).\cr}
\label{3-20}
\end{equation}
Assume that $\varrho$, $\varepsilon$ are so small that
$$
c(\varrho^{1/6}\|\bar d\|_{L_6(\Omega)}+\varepsilon\varrho^{\mu-{2\over3}}\sup_{x_3}\|\bar d\|_{L_2(S_2)}+\varrho^{\mu+{1\over3}}\sup_{x_3}\|\bar d_{x_3}\|_{L_3(S_2)})
\le{\varkappa\over 4}.
$$
Then inequality (\ref{3-20}) implies
\begin{equation}
{d\over dt}\|\vartheta\|_{L_2(\Omega)}^2+{\bar c_2\varkappa\over2} \|\vartheta\|_{H^1(\Omega)}^2\le 0,
\label{3-21}
\end{equation}
where the Poincar\'e inequality has been also used, $\bar c_2>0$ is constant.
Now, multiply (\ref{3.21}) by a sufficiently large constant $\bar c_4>0$ which will be chosen later and add the result to the inequality (\ref{3.14}).

\noindent
We get
\begin{equation}\eqal{
&{d\over dt}(\|w\|_{L_2(\Omega)}^2+\bar c_4\|\vartheta\|_{L_2(\Omega)}^2)+ {\nu\over2}\|w\|_{H^1(\Omega)}^2+{\bar c_2\bar c_4\varkappa\over2}\|\vartheta\|_{H^1(\Omega)}^2\cr
&\quad+\gamma \sum_{\alpha=1}^2\|w\cdot\bar\tau_\alpha\|_{L_2(S_1)}^2\cr
&\le\bar c_1[\|\om(\te)f\|_{L_{6/5}(\Omega)}^2+\|\vartheta\|_{L_2(\Omega)}^2 \|\om(\te)f\|_{L_3(\Omega)}^2 \cr  & \quad+\psi(\|\bar d\|_{W_{3,\infty}^1(\Omega)})(\|\bar d\|_{H^1(\Omega)}^2+\|\bar d_t\|_{L_{6/5,\infty}(\Omega)}^2)].\cr}
\label{3-22}
\end{equation}
Let $
{\rm ess}\sup_{t\in (0,T)}\|\om(\te)f\|_{L_{6/5}(\Omega)}^2=M$. Assume that $\bar c_4$ is so large that \\ ${\bar c_2\bar c_4\varkappa\over4}=\bar c_1M$.
Then inequality (\ref{3-22}) yields
$$\eqal{
&{d\over dt}(\|w\|_{L_2(\Omega)}^2+\tilde c_1\|\vartheta\|_{L_2(\Omega)}^2)+\tilde c_2(\|w\|_{H^1(\Omega)}^2
+\tilde c_1\|\vartheta\|_{H^1(\Omega)}^2) \cr &\quad +\gamma\sum_{\alpha=1}^2 \|w\cdot\bar\tau_\alpha\|_{L_2(S_1)}^2\cr
&\le c[\|\om(\te)f\|_{L_{6/5}(\Omega)}^2+\psi(\|\bar d\|_{W_{3,\infty}^1(\Omega)})\cdot(\|\bar d\|_{H^1(\Omega)}^2+\|\bar d_t\|_{L_{6/5,\infty}(\Omega)}^2)],\cr}
$$
where $\tilde c_1=\tilde c_4$, $\tilde c_2=\min\big({\nu\over2},{\bar c_2\over2}\varkappa\big)$. Next, we apply the Sobolev anisotropic imbeddings (see \cite[Ch. 3, Sect. 10]{BIN})
$$
\|\bar d_t\|_{L_{6/5,\infty(\Omega)}}\le c\|\bar d\|_{W_{6/5(\Omega)}^1}.
$$
This ends the proof.
\end{proof}
Now, introduce the following notation:
$$\eqal{
&X(t)=\tilde c_1\|\vartheta(t)\|_{L_2(\Omega)}^2+\|w(t)\|_{L_2(\Omega)}^2,\cr
&Y(t)=\tilde c_1\|\vartheta(t)\|_{H^1(\Omega)}^2+\|w(t)\|_{H^1(\Omega)}^2,\cr
&A_1(T)=\sup_{k\in\N_0}\intop_{kT}^{(k+1)T}F(t)dt,\cr}
$$
where the function $F=F(t)$ denotes the right-hand side of (\ref{3.3}).

\begin{lemma}\label{l3.2}
Let the assumptions of Lemma \ref{l3-1} hold. Assume that\\ $A_1(T)<\infty$ and $X(0)<\infty$. Then
\begin{equation}\eqal{
&{\rm ess} \sup_{kT\le\tau\le T}X(\tau)+\tilde c_2\intop_{kT}^tY(\tau)d\tau\le A_1(T){2-e^{-\tilde c_2T}\over1-e^{-\tau_2T}}+X(0)\equiv A_2(T)\cr}
\label{3.23}
\end{equation}
for all $t\in(kT,(k+1)T)$, $k\in\N_0$, where $\tilde c_2>0$ is the constant from (\ref{3.3}).
\end{lemma}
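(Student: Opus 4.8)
The plan is to read (\ref{3.3}) as a scalar Gronwall-type inequality for $X(t)$ and then iterate the resulting bound over the consecutive intervals $[kT,(k+1)T]$. As a first step I would drop the nonnegative boundary term $\gamma\sum_{\alpha=1}^2\|w\cdot\bar\tau_\alpha\|_{L_2(S_1)}^2$ from the left-hand side of (\ref{3.3}) and record, in the notation introduced above, the inequality $\frac{d}{dt}X(t)+\tilde c_2 Y(t)\le F(t)$. Since $\|u\|_{L_2(\Omega)}\le\|u\|_{H^1(\Omega)}$ gives $X(t)\le Y(t)$, this also yields the closed inequality $\frac{d}{dt}X+\tilde c_2 X\le F$, which is the form suited to Gronwall's argument.

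Multiplying by the integrating factor $e^{\tilde c_2 t}$ turns this into $\frac{d}{dt}(e^{\tilde c_2 t}X)\le e^{\tilde c_2 t}F$. Integrating over $[kT,t]$ with $t\in(kT,(k+1)T)$ and using $F\ge 0$ together with $\int_{kT}^{(k+1)T}F\,ds\le A_1(T)$, I obtain $X(t)\le e^{-\tilde c_2(t-kT)}X(kT)+A_1(T)$. Letting $t\to(k+1)T$ gives the recursion $X((k+1)T)\le e^{-\tilde c_2 T}X(kT)+A_1(T)$; writing $q=e^{-\tilde c_2 T}\in(0,1)$ and summing the resulting geometric series produces the uniform bound $X(kT)\le X(0)+A_1(T)/(1-q)$ for every $k\in\N_0$.

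With this uniform control of the values $X(kT)$, the two quantities on the left of (\ref{3.23}) follow. For the supremum I use the within-interval estimate $X(\tau)\le X(kT)+A_1(T)$, valid for all $\tau\in[kT,t]$, which after inserting the bound on $X(kT)$ gives ${\rm ess}\sup_{kT\le\tau\le t}X(\tau)\le X(0)+A_1(T)\bigl(1+\tfrac{1}{1-q}\bigr)=A_2(T)$, where I used $1+\tfrac{1}{1-q}=\tfrac{2-q}{1-q}$. For the dissipation integral I return to $\frac{d}{dt}X+\tilde c_2 Y\le F$, integrate directly over $[kT,t]$, and discard $X(t)\ge 0$ to get $\tilde c_2\int_{kT}^t Y\,d\tau\le X(kT)+A_1(T)\le A_2(T)$. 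Combining the two then yields (\ref{3.23}) with $A_2(T)=A_1(T)\tfrac{2-e^{-\tilde c_2 T}}{1-e^{-\tilde c_2 T}}+X(0)$.

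The only genuinely delicate point is the passage from the single-interval Gronwall estimate to a bound uniform in $k$: the factor $q=e^{-\tilde c_2 T}<1$, which comes precisely from the dissipation $\tilde c_2 Y\ge\tilde c_2 X$, is what makes the geometric series converge and prevents the constant from growing with $k$, and this works only because $A_1(T)=\sup_{k}\int_{kT}^{(k+1)T}F\,dt$ is finite and independent of $k$. One should also verify that $F$ is integrable on each interval so that the recursion is meaningful; this is supplied by the hypotheses of Lemma~\ref{l3-1} together with the standing assumption $A_1(T)<\infty$. Everything else is a routine application of the integrating factor and of the elementary inequality $X\le Y$.
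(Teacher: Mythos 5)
Your proof is correct and takes essentially the same route as the paper: both use the reduction $X\le Y$ to get the closed inequality ${dX\over dt}+\tilde c_2X\le F$, the integrating factor $e^{\tilde c_2t}$, the endpoint recursion $X((k+1)T)\le e^{-\tilde c_2T}X(kT)+A_1(T)$ iterated into the uniform bound $X(kT)\le{A_1(T)\over1-e^{-\tilde c_2T}}+X(0)$, and then a direct integration of ${dX\over dt}+\tilde c_2Y\le F$ over $[kT,t]$ to control the dissipation. The only cosmetic difference is that you bound ${\rm ess}\sup X$ and $\tilde c_2\int Y$ separately and then ``combine'' them (which literally yields $2A_2(T)$ for the sum), while the paper reads both off from a single integration of the differential inequality --- the same harmless factor-of-two convention in either formulation.
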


\begin{proof}
Using the above notation inequality (\ref{3.3}) takes the form
$$
{dX\over dt}+\tilde c_2Y\le F,
$$
which implies
\begin{equation}
{d\over dt}(X(t)e^{\tilde c_2t})\le F(t)e^{\tilde c_2t}.
\label{3.25}
\end{equation}
Integrating (\ref{3.25}) with respect to time from $t=kT$ to $t\in(kT,(k+1)T)$, $k\in\N_0$, we obtain
$$
X(t)\le e^{-\tilde c_2t}\intop_{kT}^tF(\tau)e^{\tilde c_2\tau}d\tau+e^{-\tilde c_2(t-kT)}X(kT).
$$
Let $t=(k+1)T$. Then
$$
X((k+1)T)\le\intop_{kT}^{(k+1)T}F(t)dt+e^{-\tilde c_2T}X(kT).
$$
By iteration we get
\begin{equation}
X(kT)\le{A_1(T)\over1-e^{-\tilde c_2T}}+e^{-\tilde c_2kT}X(0).
\label{3.26}
\end{equation}
Now, integrating (\ref{3.24}) with respect to time from\\  $t=kT$ to $t\in(kT,(k+1)T)$ and using (\ref{3.26}) yields
$$\eqal{
&{\rm ess} \sup_{kT\in\tau\le t}X(\tau)+\tilde c_2\intop_{kT}^tY(\tau)d\tau\le\intop_{kT}^tF(\tau)d\tau+X(kT)\cr
&\le A_1(T)+{A_1(T)\over1-e^{-\tilde c_2T}}+e^{-\tilde c_2kT}X(0)\le A_2(T).\cr}
$$
\end{proof}

\begin{lemma} Assume that $(v,p,\theta)$ is a solution to (\ref{1.1}) and $w$ is a weak solution to problem (\ref{4.9}). Let $d_i\in L_\infty(0,T;W_3^1(S_2(a_i)))\cap L_2(0,T;H^{1/2}(S_2(a_i))),$ $i=1,2.$ Then 
\begin{equation}
\|v\|_{V(\Omega^t)}^2\le\| w\|_{V(\Omega^t)}^2+ \|\delta\|_{V(\Omega^t)}^2,
\label{3.34}
\end{equation} where  
\ben \label{3.39} \bal
\|\delta\|_{V(\Omega^t)}^2 \le c|\tilde d|_{2,\infty,\Omega^t}^2+c\|\tilde d\|_{1,2,2,\Omega^t}^2\\
+c\sup_t\|\tilde d\|_{W_{3,\infty}^1(\Omega)}^4\exp\left(
\frac{1}{c}\sup_t\|\tilde d\|_{W_{3,\infty}^1(\Omega)}\right)\\ \cdot\int_0^t\sum_{i=1}^2\sup_{x_3} |\tilde d_i|_{2,S_2(a_i)}^2dt'
+c\sup_{t'\le t}\|d(t')\|_{L_{2,\infty}(\Omega)}^2
 \eal \een and $\tilde d_i$ is an extension of $d_i$, $i=1,2$ on $\Om.$ \end{lemma}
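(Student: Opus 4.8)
The plan is to treat the two assertions separately, since (\ref{3.34}) is elementary and all the work lies in (\ref{3.39}). For (\ref{3.34}) I would simply invoke the decomposition $v=w+\delta$ from (\ref{4.8}) and the triangle inequality for the norm $\|\cdot\|_{V(\Omega^t)}$, applied both to the ess-sup-in-time of the $L_2(\Omega)$ norm and to the $L_2(0,t;L_2(\Omega))$ norm of the gradient; this gives $\|v\|_{V(\Omega^t)}\le\|w\|_{V(\Omega^t)}+\|\delta\|_{V(\Omega^t)}$ and hence the stated quadratic bound (absorbing, if needed, the cross term at the cost of harmless constants).

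For (\ref{3.39}) I would decompose $\delta=b+\nabla\varphi$ as in (\ref{4.8}) and estimate the two constituents of the $V$-norm, namely $\mathrm{ess\,sup}_{t'\le t}\|\delta(t')\|_{L_2(\Omega)}^2$ and $\int_0^t\|\nabla\delta(t')\|_{L_2(\Omega)}^2dt'$, reusing the pointwise-in-time computations already performed in (\ref{3.12}), (\ref{3.17}) and (\ref{3.18}). For the supremum term I would bound $\|b\|_{L_2(\Omega)}$ using that $b=\sum_i\tilde d_i\eta_i\bar e_3$ is supported in the thin strips $\tilde S_2(a_i,\varrho)$ with $|\eta_i|\le1$, which produces contributions of the form $c|\tilde d|_{2,\infty,\Omega^t}^2$ and $c\sup_{t'\le t}\|d(t')\|_{L_{2,\infty}(\Omega)}^2$; the term $\|\nabla\varphi\|_{L_2(\Omega)}$ is controlled through the Neumann problem (\ref{4.7}) by $\|\divv b\|_{L_2(\Omega)}$ via Lemma~\ref{l2.2} with $p=2$ (or standard elliptic regularity), giving bounds of the same type.

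For the gradient term I would split $\nabla b$ into the regular part $\nabla\tilde d_i\,\eta_i$, which yields $c\|\tilde d\|_{1,2,2,\Omega^t}^2$ after integration in $t'$, and the singular part $\tilde d_i\nabla\eta_i$, where $|\eta_i'|\le\varepsilon/\sigma_i$; the latter generates, exactly as in (\ref{3.17}), the integral $\varepsilon^2\int_r^\varrho\sigma^{-2}d\sigma=\varepsilon^2(1/r-1/\varrho)\le c\,\varepsilon^2\varrho^{-1}e^{1/\varepsilon}\sup_{x_3}|\tilde d_i|_{2,S_2(a_i)}^2$, while the $\nabla^2\varphi$ contribution is again absorbed through Lemma~\ref{l2.2}. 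Substituting the choice (\ref{3.24}) of $\varepsilon$ and $\varrho$ converts the factor $\varepsilon^2\varrho^{-1}e^{1/\varepsilon}$ into a quantity of order $\|\tilde d\|_{W_{3,\infty}^1(\Omega)}^4\exp\big(\tfrac1c\|\tilde d\|_{W_{3,\infty}^1(\Omega)}\big)$, reproducing the third line of (\ref{3.39}).

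The main obstacle, and the only genuinely non-routine bookkeeping, is that the cutoff parameters $\varepsilon,\varrho$ in (\ref{3.24}) depend on $\|\tilde d\|_{W_{3,\infty}^1(\Omega)}$ and hence on time, whereas the $V$-norm calls for a single integration over $(0,t)$. The key step is therefore to fix $\varepsilon,\varrho$ using $\sup_{t'\le t}\|\tilde d(t')\|_{W_{3,\infty}^1(\Omega)}$, so that the exponential prefactor can be pulled outside the time integral as $\sup_{t'\le t}\|\tilde d\|_{W_{3,\infty}^1(\Omega)}^4\exp(\cdots)$, leaving behind only the time-integrable remainder $\int_0^t\sum_{i=1}^2\sup_{x_3}|\tilde d_i|_{2,S_2(a_i)}^2dt'$. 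Once this separation of the supremum prefactor from the integrable tail is in place, collecting the four groups of terms yields precisely (\ref{3.39}).
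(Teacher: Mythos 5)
Most of your outline coincides with the paper's actual proof: \eqref{3.34} is indeed just the decomposition $v=w+\delta$ from \eqref{4.8} plus the triangle inequality, and for \eqref{3.39} the paper likewise splits $\delta=b+\nabla\varphi$, treats $\nabla b$ by separating the regular part $\nabla\tilde d_i\,\eta_i$ from the singular part $\tilde d_i\nabla\eta_i$ exactly as in \eqref{3.17}, and --- as you correctly identify as the key bookkeeping step --- fixes $\varepsilon,\varrho$ through \eqref{3.24} with $\sup_t\|\tilde d\|_{W_{3,\infty}^1(\Omega)}$, so that the factor $\varepsilon^2\varrho^{-1}e^{1/\varepsilon}\sim\|\tilde d\|_{W_{3,\infty}^1(\Omega)}^4\exp\big(\tfrac1c\|\tilde d\|_{W_{3,\infty}^1(\Omega)}\big)$ is pulled outside the time integral, leaving $\int_0^t\sum_i\sup_{x_3}|\tilde d_i|_{2,S_2(a_i)}^2dt'$ behind. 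All of that matches \eqref{3.36} and the surrounding computation.

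There is, however, a genuine gap in your treatment of the supremum-in-time piece ${\rm ess}\sup_{t'\le t}\|\nabla\varphi(t')\|_{L_2(\Omega)}^2$. You propose to control $\|\nabla\varphi\|_{L_2(\Omega)}$ by $\|\divv b\|_{L_2(\Omega)}$ ``via Lemma~\ref{l2.2} with $p=2$ or standard elliptic regularity,'' claiming bounds of the same type. First, Lemma~\ref{l2.2} bounds weighted \emph{second} derivatives $\|\nabla^2\varphi\|_{L_{p,\mu}(\Omega)}$ and does not directly control $\|\nabla\varphi\|_{L_2(\Omega)}$. Second, and more seriously, even the correct elliptic estimate $\|\nabla\varphi\|_{L_2(\Omega)}\le c\|\divv b\|_{L_2(\Omega)}$ is the wrong tool here: $\divv b$ contains the singular part $\tilde d_i\,\partial_{x_3}\eta_i$ with $|\eta_i'|\le\varepsilon/\sigma_i$, so $\|\divv b\|_{L_2(\Omega)}^2$ carries the factor $\varepsilon^2(1/r-1/\varrho)\sim\varepsilon^2\varrho^{-1}e^{1/\varepsilon}$, which after the choice \eqref{3.24} becomes the huge prefactor $\|\tilde d\|_{W_{3,\infty}^1(\Omega)}^4\exp(\cdot)$. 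In your route this prefactor would multiply a \emph{supremum-in-time} quantity $\sup_{t'\le t}\sup_{x_3}|\tilde d_i|_{2,S_2(a_i)}^2$, whereas in \eqref{3.39} the exponential appears only on the \emph{time-integrated} term, and the sole sup-in-time contributions, $|\tilde d|_{2,\infty,\Omega^t}^2$ and $\sup_{t'\le t}\|d(t')\|_{L_{2,\infty}(\Omega)}^2$, come with a plain constant; for small $t$ your bound is strictly weaker and does not yield \eqref{3.39}. The paper's fix, which your proposal is missing, is the variational identity \eqref{3.37}: multiply $(\ref{4.7})_1$ by $\varphi$ and integrate by parts so the derivative is thrown off $b$, giving $|\nabla\varphi|_{2,\Omega}^2=\sum_{i=1}^2\int_{S_2(a_i)}d_i\varphi\,dS_2-\int_\Omega b\cdot\nabla\varphi\,dx$; since $|\eta_i|\le1$, the norm $|b|_{2,\Omega}$ carries no singular factor, and H\"older, Young, the trace estimate and Poincar\'e (using $\int_\Omega\varphi\,dx=0$) give $\|\varphi\|_{1,\Omega}\le c\|d\|_{L_{2,\infty}(\Omega)}$, i.e.\ \eqref{3.38}, which is exactly the last term of \eqref{3.39}. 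The crude bound $\|\nabla^2\varphi\|_{L_2(\Omega)}\le c\|\divv b\|_{L_2(\Omega)}$ is legitimate, but only inside the time integral $\int_0^t\|\nabla\varphi\|_{1,\Omega}^2dt'$, where the exponential factor is admissible.
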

\begin{proof}
Having estimate for $ w$ and using transformation (\ref{4.8}) we will derive
(\ref{3.34}).
\noindent
The last term in the r.h.s. of (\ref{3.34}) can be treated as
\ben \label{3.35} \bal
\|\delta\|_{V(\Omega^t)}^2 \le\|b\|_{V(\Omega^t)}^2+ \|\nb\varphi\|_{V(\Omega^t)}^2\le|b|_{2,\infty,\Omega^t}+ \|b\|_{1,2,2,\Omega^t}^2\\
+|\nb\varphi|_{2,\infty,\Omega^t}^2+ \|\nb\varphi\|_{1,2,2,\Omega^t}^2\equiv I.
 \eal \een
Now, we analyze the particular terms in $I$.

\noindent
The first term,
$$
|b|_{2,\infty,\Omega^t}^2\le|\alpha|_{2,\infty,\Omega^t}^2\le\sum_{i=1}^2 |\tilde d_i|_{2,\infty,\Omega^t}^2=|\tilde d|_{2,\infty,\Omega^t}.
$$
For the second term we have
\benn
\|b\|_{1,2,2,\Omega^t}^2=\int_0^t(|\nb b|_{2,\Omega}^2+ |b|_{2,\Omega}^2)dt'\le\sum_{i=1}^2\int_0^t|\nb\tilde d_i\eta_i|_{2,\Omega}^2dt'\\
\quad+\sum_{i=1}^2 \int_0^t|\tilde d_i\nb\eta_i|_{2,\Omega}^2dt'+\sum_{i=1}^2\int_0^t|\tilde d_i|_{2,\Omega}^2dt'\\
\le c\|\tilde d\|_{1,2,2,\Omega^t}^2+\varepsilon^2\bigg[\int_0^t \intop_{-a+r}^{-a+\varrho}dx_3\int_{S_2(a_1)}dx'\bigg|\frac{\tilde d_1}{ a+x_3}\bigg|^2\\
\quad+\int_0^tdt'\int_{a-\varrho}^{a-r}dx_3\int_{S_2(a_2)}dx'\bigg| \frac{\tilde d_2}{ a-x_3}\bigg|^2\bigg]\\
\le c\|\tilde d\|_{1,2,2,\Omega^t}^2+\varepsilon^2\int_0^t\bigg[\sup_{x_3} |\tilde d_1|_{2,S_2(a_1)}^2\int_{-a+r}^{-a+\varrho}\bigg|\frac{1}{ a+x_3}\bigg|^2dx_3\\
\quad+\sup_{x_3}|\tilde d_2|_{2,S_2(a_2)}^2\int_{a-\varrho}^{a-r}\bigg| \frac{1}{ a-x_3}\bigg|^2dx_3\bigg]dt'\\
\le c\|\tilde d\|_{1,2,2,\Omega^t}^2+c\varepsilon^2\int_0^t\sum_{i=1}^2\sup_{x_3}|\tilde d_i|_{2,S_2(a_i)}^2dt' \int_r^\varrho\frac{dy}{y^2}\\
=c\|\tilde d\|_{1,2,2,\Omega^t}^2+c\varepsilon^2\int_0^t\sum_{i=1}^2\sup_{x_3}
|\tilde d_i|_{2,S_2(a_i)}^2dt'\bigg(\frac{1}{r}-\frac{1}{\varrho}\bigg)
\eenn
\benn
=c\|\tilde d\|_{1,2,2,\Omega^t}^2+c\varepsilon^2\int_0^t\sum_{i=1}^2\sup_{x_3}
|\tilde d_i|_{2,S_2(a_i)}^2dt'\frac{1}{\varrho}\left[\exp\left(\frac{1}{\varepsilon}\right)-1\right]\\
\le c\|\tilde d\|_{1,2,2,\Omega^t}+c\frac{\varepsilon^2}{\varrho}\exp \bigg(\frac{1}{\varrho}\bigg)\int_0^t\sum_{i=1}^2\sup_{x_3}|\tilde d_i|_{2,S_2(a_i)}^2dt'\equiv J_1.
\eenn
Applying parameters settings (\ref{3.24}) gives
\ben \label{3.36} \bal
J_1&\le c\|\tilde d\|_{1,2,2,\Omega^t}^2+c\sup_t\|\tilde d\|_{W_{3,\infty}^1(\Omega)}\exp\bigg(\frac{\sup_t\|\tilde d\|_{W_{3,\infty}^1(\Omega)}}{ c}\bigg)\\
&\quad\cdot\int_0^t\sum_{i=1}^2\sup_{x_3}|\tilde d_i|_{2,S_2(a_i)}^2dt'\equiv J.
 \eal \een
Next, we analyze the third term in $I$ defined in (\ref{3.35}).
$$
|\nb\varphi|_{2,\infty,\Omega^t}^2=\sup_t|\nb\varphi(t')|_{2,\Omega}^2
$$
We multiply $(\ref{4.7})_1$ by $\varphi$, integrate by parts and apply $(\ref{4.7})_2$ to get
\ben \label{3.37} \bal
|\nb\varphi|_{2,\Omega}^2&=\int_\Omega\div b\,\varphi dx=\int_\Omega\div (b\varphi)dx-\int_\Omega b\cdot\nb\varphi dx\\
&=\sum_{i=1}^2\int_{S_2(a_i)}d_i\varphi dS_2-\int_\Omega b\cdot\nb\varphi dx.
 \eal \een
Using the H\"older and Young inequalities to the r.h.s. of (\ref{3.37}) and employing the Poincar\'e inequality to the l.h.s. of (\ref{3.37}) we have
$$
\|\varphi\|_{1,\Omega}^2\le\varepsilon_1|\nb\varphi|_{2,\Omega}^2+ \frac{c}{\varepsilon_1}|b|_{2,\Omega}^2+\varepsilon_2|\varphi|_{2,S_2}^2+ \frac{c}{\varepsilon_2}\sum_{i=1}^2|\tilde d_i|_{2,S_2(a_i)}^2.
$$
Then, for sufficiently small $\varepsilon_1$ and $\varepsilon_2$ we derive
$$
\|\varphi\|_{1,\Omega}^2\le c|b|_{2,\Omega}^2+c\sum_{i=1}^2|d_i|_{2,S_2(a_i)}^2\le c|d|_{L_{2,\infty}(\Omega)}^2
$$
Further, we get
\begin{equation}
\|\varphi\|_{1,2,\infty,\Omega^t}\le c\sup_{t'\le t}|d(t')|_{L_{2,\infty}(\Omega)}.
\label{3.38}
\end{equation}
Finally, we consider the last term in $I$ 
$$
\|\nb\varphi\|_{1,2,2,\Omega^t}^2=\int_0^t(|\nb^2\varphi|_{2,\Omega}^2+ |\nb\varphi|_{2,\Omega}^2)dt'\equiv J_2.
$$
Applying solvability of problem (\ref{4.7}) we obtain
$$
J_2\le c\int_0^t|d(t')|_{L_{2,\infty}(\Omega)}^2dt'+c\int_0^t|\div b(t')|_{2,\Omega}^2dt'\equiv J_3.
$$
The second term in $J_3$ can be estimated by
$$
c\int_0^t\sum_{i=1}^2(|\nb\tilde d_i|_{2,\Omega}^2+|\tilde d_i\nb\eta_i|_{2,\Omega}^2)dt'\le cJ,
$$
with $J$ defined in $(\ref{3.36})$. Summarizing, we conclude estimate (\ref{3.39}) for $\de$
and this concludes the proof. \end{proof}
\noindent
The above considerations imply the following result:

\begin{theorem}\label{l3.1}
Let $(v,p,\theta)$ be a solutions to problem (\ref{1.1}) and $ w$ be defined by (\ref{4.9}). Assume that $d_i\in L_\infty(0,T;W_3^1(S_2(a_i)))\cap L_2(0,T;H^{1/2}(S_2(a_i))),$  $\omega f\in L_{6/5}(\Om^T),$ $\tilde d_i$ is an extension of $d_i$, $i=1,2.$ Then
\begin{itemize}
\item[(1)]
$$
\| w\|_{V(\Om^t)}^2\le cA_1^2(T)+\|w(0)\|_{L_2(\Omega)}^2,
$$
where $T>0$, $t\in[0,T]$ and
\benn
A_1^2(T)=\int_{0}^{T}(\|\omega(\theta)f(t)\|_{L_{6/5}(\Omega)}^2+ \sum_{i=1}^2\|d_i(t)\|_{W^1_3(S_2(a_i))}^2 \\ +\sum_{i=1}^2\|d_{i,t}(t)\|_{W^1_{6/5}(S_2(a_i))}^2)dt \cdot
\phi(\sum_{i=1}^2\sup_t\|d_i(t)\|_{W^1_3(S_2(a_i))})<\infty,
\eenn
where $\phi$ is an increasing positive function.
Next
\item[(2)]
\ben \bal
& & \|v\|_{V(\Omega^T)}^2 + \|\theta\|_{V(\Omega^T)}^2 \le  A_1(T)^2+\|w(0)\|_{L_2(\Omega)}^2 +\|\vartheta(0)\|_{L_2(\Omega)}^2  \\ & & +c\sup_{t'\in(0,T)}\|\tilde d(t')\|_{L_2(\Omega)}^2+c\intop_{0}^T\|\tilde d(t')\|_{W^1_2(\Omega)}^2dt' \\ & & \qquad
+c\sup_{t'\in(0,T]}\|\tilde d(t')\|_{W_{3,\infty}^1(\Omega)}^4\exp\bigg( {1\over c}\sup_{t'\in(0,T)}\|\tilde d(t')\|_{W_{3,\infty}^1(\Omega)}\bigg)\cdot \\
& & \qquad\cdot\intop_{0}^T\sum_{i=1}^2\sup_{x_3}\|\tilde d_i(t')\|_{L_2(S_2(a_i))}^2dt' +c\|\om(\te)f\|_{L_{6/5}(\Omega)}^2 \equiv A^2(T).
\label{4.26} \eal
\een
 
\end{itemize}
\end{theorem}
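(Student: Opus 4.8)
The plan is to read Theorem~\ref{l3.1} as the synthesis of four results already in hand: the closed differential inequality of Lemma~\ref{l3-1}, the time-integration argument of Lemma~\ref{l3.2}, the splitting estimate (\ref{3.34})--(\ref{3.39}) for the corrector $\delta$, and the two-sided temperature bounds of Lemmas~\ref{l2.3} and \ref{l2.4}. Essentially no new analytic estimate is needed; the work is a careful bookkeeping of the constants $\tilde c_1,\tilde c_2,\bar c_1,\bar c_4$ and of the exponential flux coefficients produced earlier, together with the reduction of the weighted/anisotropic $\Omega$-norms of the extension $\tilde d$ back to boundary norms of $d_i$ on $S_2(a_i)$.

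For part~(1) I would start from the inequality $\frac{d}{dt}X+\tilde c_2Y\le F$ recorded in (\ref{3.3}), with $X=\tilde c_1\|\vartheta\|_{L_2(\Omega)}^2+\|w\|_{L_2(\Omega)}^2$ and $Y=\tilde c_1\|\vartheta\|_{H^1(\Omega)}^2+\|w\|_{H^1(\Omega)}^2$, and invoke Lemma~\ref{l3.2} to obtain ${\rm ess}\sup_{[0,t]}X+\tilde c_2\int_0^tY\,d\tau\le A_2(T)$. Since $\|w\|_{V(\Omega^t)}^2={\rm ess}\sup_t\|w\|_{L_2(\Omega)}^2+\int_0^t\|\nabla w\|_{L_2(\Omega)}^2\,d\tau$ is dominated by ${\rm ess}\sup X+\int Y$, the velocity component is controlled by $A_2(T)$. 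It then remains to identify $A_1(T)=\sup_k\int_{kT}^{(k+1)T}F$ with the quantity $A_1^2(T)$ of the statement: the force yields $\|\omega(\theta)f\|_{L_{6/5}(\Omega)}^2$, while the flux contributions $\|\tilde d\|_{1,\Omega}^2$, $\|\tilde d\|_{1,\Omega}^4$, $\|\tilde d_t\|_{1,6/5,\Omega}^2$ and their exponential coefficients are collected into the increasing factor $\phi(\sum_i\sup_t\|d_i\|_{W_3^1(S_2(a_i))})$, using the anisotropic imbedding $\|\tilde d_t\|_{L_{6/5,\infty}(\Omega)}\le c\|\tilde d\|_{W_{6/5}^1(\Omega)}$ and trace bounds on $S_2(a_i)$. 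The initial datum in $X(0)=\tilde c_1\|\vartheta(0)\|_{L_2}^2+\|w(0)\|_{L_2}^2$ has its temperature part absorbed through the boundedness of $\omega(\theta)$ (Lemma~\ref{l2.4}), leaving $\|w\|_{V(\Omega^t)}^2\le cA_1^2(T)+\|w(0)\|_{L_2(\Omega)}^2$.

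For part~(2) I would use the decomposition $v=w+\delta$ from (\ref{4.8}) and the norm splitting (\ref{3.34}), inserting the bound of part~(1) for $\|w\|_{V(\Omega^t)}^2$ and the corrector estimate (\ref{3.39}) for $\|\delta\|_{V(\Omega^t)}^2$; this reproduces on the right of (\ref{4.26}) exactly the terms $c\sup_{t'}\|\tilde d\|_{L_2(\Omega)}^2$, $c\int_0^T\|\tilde d\|_{W_2^1(\Omega)}^2$ and the exponential flux contribution. For the temperature I would observe that, since $v=w+\delta$, the shifted problem (\ref{2-6}) is the original transport--diffusion equation (\ref{2.3}), so $\vartheta=\theta$; testing with $\theta$ yields the $L_2$ energy identity (\ref{3-5}), whose only dangerous boundary term is the inflow integral $\tfrac12\intop_{S_2(a_1)}d_1\theta^2\,dS_2$. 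Estimating it through the upper bound $\theta\le\theta^*$ of Lemma~\ref{l2.4} produces $\|d_1\|_{L_1(S_2^T(a_1))}{\theta^*}^2$, while a Gronwall step with the $L_3$-trace of $d_1$ gives the factor $\exp(\|d_1\|_{L_6(0,T;L_3(S_2(a_1)))}^6)$, hence (\ref{1-4}); adding this to the velocity bound completes (\ref{4.26}).

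The delicate point I expect to fight with is the coupling between the two equations. In deriving (\ref{3.3}) the force integral $\intop_\Omega\omega(\vartheta)f\cdot w\,dx$ generates a term of the form $\|\vartheta\|_{L_2(\Omega)}^2\|\omega(\theta)f\|_{L_3(\Omega)}^2$, visible in (\ref{3-25}), which is not of the autonomous type $F$; to close the estimate one must add the temperature energy (\ref{3-21}) multiplied by a sufficiently large $\bar c_4$, fixed through the balance $\tfrac14\bar c_2\bar c_4\varkappa=\bar c_1M$ with $M={\rm ess}\sup_t\|\omega(\theta)f\|_{L_{6/5}(\Omega)}^2$, so that the coupling term is swallowed by the dissipation $\tfrac12\bar c_2\bar c_4\varkappa\|\vartheta\|_{H^1(\Omega)}^2$ on the left. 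The other place requiring care is respecting the admissible ranges $\mu>\tfrac23$ and $\varrho<1$ in the parameter choice (\ref{3.24}), which keep the weighted integrals in (\ref{3.12}) and (\ref{3.17}) finite; everything else is a routine assembly of constants already fixed in Lemmas~\ref{l3-1} and \ref{l3.2}.
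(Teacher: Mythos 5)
Your proposal is correct and, for the velocity part, is exactly the paper's argument: the paper's own proof of Theorem~\ref{l3.1} is a one-line citation of (\ref{3.3}) (fed through Lemma~\ref{l3.2}) for part~(1) and of (\ref{3.34})--(\ref{3.39}) for part~(2), and your assembly of these, including the $\bar c_4$-balancing of the coupling term $\|\vartheta\|_{L_2(\Omega)}^2\|\omega(\theta)f\|_{L_3(\Omega)}^2$ and the constraints $\mu>\tfrac23$, $\varrho<1$, faithfully reconstructs what Lemmas~\ref{l3-1} and \ref{l3.2} already contain. Where you genuinely diverge is the temperature component of part~(2): you route it through the argument of Theorem~\ref{l5.8} (testing (\ref{2.3}) with $\theta$, Gronwall with the $L_3$-trace of $d_1$, and the bound $\theta\le\theta^*$ of Lemma~\ref{l2.4}), which produces the terms $\exp(\|d_1\|_{L_6(0,T;L_3(S_2(a_1)))}^6)\|\theta(0)\|_{L_2(\Omega)}^2$ and $\|d_1\|_{L_1(S_2^T(a_1))}^2{\theta^*}^2$ of (\ref{1-4}) --- terms that do \emph{not} appear on the right side of (\ref{4.26}). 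The paper does not need this detour: since $X(t)=\tilde c_1\|\vartheta\|_{L_2(\Omega)}^2+\|w\|_{L_2(\Omega)}^2$ and $Y(t)=\tilde c_1\|\vartheta\|_{H^1(\Omega)}^2+\|w\|_{H^1(\Omega)}^2$ already carry the $\vartheta$-norms, Lemma~\ref{l3.2} controls $\|\vartheta\|_{V(\Omega^T)}^2$ \emph{jointly} with $\|w\|_{V(\Omega^T)}^2$ by the same right-hand side, with only $\|\vartheta(0)\|_{L_2(\Omega)}^2$ added through $X(0)$ --- which is exactly the form of (\ref{4.26}); Theorem~\ref{l5.8} is a separate, independent estimate used only to assemble (\ref{1-4}) in the Main Theorem. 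So your route yields a correct energy bound for $\theta$, but not literally the right-hand side claimed in (\ref{4.26}), and the direct route is both simpler and exact. One small mis-statement to fix: the temperature part of $X(0)$ is not ``absorbed through the boundedness of $\omega(\theta)$''; either keep $\tilde c_1\|\vartheta(0)\|_{L_2(\Omega)}^2$ explicitly (as part~(2) does) or bound it by $\theta^*|\Omega|^{1/2}$ via Lemma~\ref{l2.4} --- it is the boundedness of $\theta$, not of $\omega$, that is relevant there.
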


\begin{proof}
Property (1) follows from (\ref{3.3}) and property (2) from (\ref{3.34}) and (\ref{3.39}). This ends the proof.
\end{proof}

\begin{theorem}(energy estimate for temperature) \label{l5.8}
Let $\theta(0)\in L_2(\Omega),$ $d_1\in L_6(0,t;L_3(S_2(-a))) \cap L_2(0,t; L_{\infty}(S_2(-a)).$ Then $\theta$ is bounded from above with positive $\theta^*$ and solutions to (\ref{2.3}) satisfy
\ben \bal
&\|\theta\|^2_{V(\Om^t)} \le c\exp(\|d_1\|_{L_6(0,t;L_3(S_2(-a))}^6)\|\theta(0)\|_{L_2(\Omega)}^2 
\\ & \quad + \|d_1\|^2_{L_1(S_2^t(-a))}{\theta^*}^2+ \|\theta(0)\|_{L_1(\Om)}^2.
\label{5.30}
\eal \een
\end{theorem}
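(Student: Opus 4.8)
The plan is to run a standard $L_2$ energy argument on the temperature equation $(\ref{2.3})_1$, with essentially the whole difficulty concentrated in the inflow boundary term. First I would test $(\ref{2.3})_1$ with $\theta$ and integrate over $\Omega$, obtaining
\[
\frac12\frac{d}{dt}\|\theta\|_{L_2(\Omega)}^2+\varkappa\|\nabla\theta\|_{L_2(\Omega)}^2 =-\intop_\Omega v\cdot\nabla\theta\,\theta\,dx.
\]
Since $\divv v=0$, the right-hand side equals $-\frac12\intop_\Omega\divv(v\theta^2)dx=-\frac12\intop_S v\cdot\bar n\,\theta^2dS$. The condition $v\cdot\bar n|_{S_1}=0$ removes the lateral part, and on $S_2$ I would insert $v\cdot\bar n=d$ with $d|_{S_2(-a)}=-d_1$, $d|_{S_2(a)}=d_2$. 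Because $d_2\ge0$, the outflow contribution $-\frac12\intop_{S_2(a)}d_2\theta^2dS_2$ carries a favorable sign and can be discarded, leaving
\[
\frac12\frac{d}{dt}\|\theta\|_{L_2(\Omega)}^2+\varkappa\|\nabla\theta\|_{L_2(\Omega)}^2\le\frac12\intop_{S_2(-a)}d_1\theta^2dS_2.
\]

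The core of the argument is the estimate of this inflow term, which I would treat in two complementary ways. For the exponential factor I would use H\"older on $S_2(-a)$, namely $\intop_{S_2(-a)}d_1\theta^2dS_2\le\|d_1\|_{L_3(S_2(-a))}\|\theta\|_{L_3(S_2(-a))}^2$, followed by the trace interpolation inequality $\|\theta\|_{L_3(S_2(-a))}\le c\|\nabla\theta\|_{L_2(\Omega)}^{5/6}\|\theta\|_{L_2(\Omega)}^{1/6}+c\|\theta\|_{L_2(\Omega)}$, whose exponent $5/6$ is dictated by scaling since $\Omega\subset\R^3$ and $S_2(-a)$ is two-dimensional. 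Squaring and applying Young's inequality with the conjugate pair $(6/5,6)$,
\[
\|d_1\|_{L_3(S_2(-a))}\|\nabla\theta\|_{L_2(\Omega)}^{5/3}\|\theta\|_{L_2(\Omega)}^{1/3}\le\varepsilon\|\nabla\theta\|_{L_2(\Omega)}^2+c_\varepsilon\|d_1\|_{L_3(S_2(-a))}^6\|\theta\|_{L_2(\Omega)}^2,
\]
which is exactly the source of the sixth power; the lower-order term from the trace inequality contributes only $c\|d_1\|_{L_3}\|\theta\|_{L_2}^2$, which is absorbed in the same step. Choosing $\varepsilon$ small enough to soak up $\varepsilon\|\nabla\theta\|_{L_2(\Omega)}^2$ into the dissipation on the left gives $\frac{d}{dt}\|\theta\|_{L_2(\Omega)}^2+\varkappa\|\nabla\theta\|_{L_2(\Omega)}^2\le c\|d_1\|_{L_3(S_2(-a))}^6\|\theta\|_{L_2(\Omega)}^2$, and Gronwall's lemma then yields $\esssup_{t'\le t}\|\theta(t')\|_{L_2(\Omega)}^2\le\exp\big(c\|d_1\|_{L_6(0,t;L_3(S_2(-a)))}^6\big)\|\theta(0)\|_{L_2(\Omega)}^2$, the leading term of (\ref{5.30}).

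For the remaining contribution to the $V(\Omega^t)$-norm, namely $\intop_0^t\|\nabla\theta\|_{L_2(\Omega)}^2dt'$, I would instead bound the inflow term crudely using the a priori upper bound $\theta\le\theta^*$ supplied by Lemma~\ref{l2.4} (available since $\theta_*\le\theta_0\le\theta^*$): on $S_2(-a)$ one has $\theta^2\le{\theta^*}^2$, hence $\intop_{S_2(-a)}d_1\theta^2dS_2\le{\theta^*}^2\|d_1\|_{L_1(S_2(-a))}$. Integrating the energy inequality in time and combining with the $L_\infty(0,t;L_2)$ bound already established produces the $\|d_1\|^2_{L_1(S_2^t(-a))}{\theta^*}^2$ and $\|\theta(0)\|_{L_2(\Omega)}^2$ terms, and adding the two estimates assembles the full bound (\ref{5.30}).

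The main obstacle is the trace term $\intop_{S_2(-a)}d_1\theta^2dS_2$: one must pick the trace interpolation exponent and the Young pairing so that precisely the sixth power of $\|d_1\|_{L_3}$ is produced while the gradient norm is absorbed into the viscous dissipation, and one must genuinely exploit the favorable sign $d_2\ge0$ to discard the $S_2(a)$ integral rather than having to control it. Once the boundary term is split in these two ways, the remainder is routine Gronwall and time-integration bookkeeping.
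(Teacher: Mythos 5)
Your proposal is correct, and its first half (energy identity, discarding the outflow term via $d_2\ge0$, H\"older in $L_3\times L_{3/2}$ on $S_2(-a)$, trace interpolation with exponent $5/6$, Young with the pair $(6/5,6)$ producing $\|d_1\|_{L_3}^6$, absorption and Gronwall) is exactly the paper's argument — the paper compresses the trace-interpolation step into the single line $\frac{1}{2}|d_1|_{3,S_2(-a)}|\theta|_{3,S_2(-a)}^2\le\frac{\varkappa}{2}|\nabla\theta|_{2,\Omega}^2+\frac{c}{\varkappa}|d_1|_{3,S_2(-a)}^6|\theta|_{2,\Omega}^2$, and then runs the same integrating-factor/Gronwall computation. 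Where you genuinely diverge is in how the $\theta^*$-term arises. The paper extracts \emph{both} the sup-in-time bound and $\int_0^t\|\nabla\theta\|_{L_2(\Omega)}^2dt'$ from the exponential Gronwall inequality, and then, because it reads the $V$-norm inside this proof as containing the full $\|\theta\|_{L_2(0,t;H^1(\Omega))}$, it separately controls the spatial mean via the first-moment identity $\frac{d}{dt}\int_\Omega\theta\,dx\le|d_1|_{1,S_2(-a)}\theta^*$ (this is where Lemma~\ref{l2.4} enters for the paper) and then uses the Poincar\'e inequality in oscillation form $|\theta|_{2,\Omega}\le c|\nabla\theta|_{2,\Omega}+|\int_\Omega\theta\,dx|$; squaring the mean bound is what produces the paper's $\|d_1\|_{L_1(S_2^t(-a))}^2{\theta^*}^2$ and $\|\theta(0)\|_{L_1(\Omega)}^2$. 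You instead apply the maximum principle $\theta\le\theta^*$ directly to the boundary integrand, getting $\int_{S_2(-a)}d_1\theta^2dS_2\le{\theta^*}^2\|d_1\|_{L_1(S_2(-a))}$ and hence the dissipation bound by time integration; this is simpler, avoids the mean-value/Poincar\'e detour entirely (and in fact also yields the sup-$L_2$ bound without any exponential), but note it gives ${\theta^*}^2\|d_1\|_{L_1(S_2^t(-a))}$ to the \emph{first} power rather than the squared $L_1$-norm in (\ref{5.30}); that cosmetic mismatch is repaired by Young's inequality, $\|d_1\|_{L_1}{\theta^*}^2\le\frac{1}{2}{\theta^*}^2+\frac{1}{2}\|d_1\|_{L_1}^2{\theta^*}^2$, with the constant ${\theta^*}^2$ absorbed (using $\theta_0\ge\theta_*>0$, so ${\theta^*}^2\le c(\theta^*/\theta_*)^2\|\theta(0)\|_{L_1(\Omega)}^2$) into the last term of the stated estimate. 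Both routes rest on the upper bound $\theta\le\theta^*$ of Lemma~\ref{l2.4}, which you correctly invoke.
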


\begin{proof}
Multiply $(\ref{2.3})_1$ by $\theta$ and integrate over $\Omega$. Then it follows
\ben \bal 
\frac{1}{2}\frac{d}{dt}|\theta|_{2,\Omega}^2+\varkappa\intop_\Omega|\nabla\theta|^2dx=-\frac{1}{2}\intop_\Omega v\cdot\nabla\theta^2dx = -\frac{1}{2} \intop_{\Omega} \divv (v\theta^2) dx \\ \le \frac{1}{2} \int_{S_2(-a)} \tilde{d}_1 \theta^2d S_2 - \frac{1}{2} \int_{S_2(a)} \tilde{d}_2 \theta^2 d S_2\le \frac{1}{2} |d_1|_{3,S_2(-a)} |\theta|_{3,S_2(-a)}^2 \\ \le \frac{\varkappa}{2}|\nb \theta|_{2,\Omega}^2 + \frac{c}{\varkappa} |d_1|_{3,S_2(-a)}^6 |\theta|_{2,\Om}^2.
\label{5.31}
\eal \een
Hence 
\benn 
\frac{d}{dt} |\theta|_{2,\Omega}^2+\varkappa |\nb\theta|_{2,\Omega}^2 \le c |d_1|_{3,S_2(-a)}^6 |\theta(0)|_{2,\Omega}^2 
\eenn
Continuing,
\benn 
\frac{d}{dt} (|\theta|_{2,\Omega}^2 e^{-c|d_1|_{3,6,S_2^t(-a)}^6 })+ \varkappa|\nb\theta|^2_{2,\Omega}e^{-c|d_1|_{3,6,S_2^t(-a)}^6 } \le 0 \eenn so that
\ben \label{1} \bal |\theta|_{2,\Omega}^2 +\varkappa |\nb\theta|_{2,\Omega}^2 \le e^{c|d_1|_{3,6,S_2(-a)}^6 } |\theta(0)|_{2,\Omega}^2. 
\eal \een
In order to estimate the norm 
\benn \|\theta\|_{V(\Omega^t)} = {\rm ess}
\sup_{t'\in(0,t)}|\theta(t)|_{2,\Omega} + \|\theta\|_{L_2(0,t;H^1(\Omega))} \eenn
we conclude from $(\ref{2.3})_1$ 
\benn \frac{d}{dt} \int_{\Om} \theta dx = -\int_{\Om} v\cdot \nb \theta dx \le \int_{S_2(-a)} \tilde{d}_1 \theta dS_2 \le |d_1|_{1,S_2(-a)} \theta^* \eenn
Hence \benn \int_{\Omega} \te dx \le |d_1|_{1,S_2^t(-a)} \theta^* + \int_{\Omega} \theta(0) dx \eenn
and 
\ben \bal \label{2} |\theta|_{2,\Om} = \left|\theta - \frac{1}{|\Om|}\int_{\Om} \te dx + \frac{1}{|\Om|}\int_{\Om} \te dx\right|_{2,\Om} \\ \le \left|\theta - \frac{1}{|\Om|}\int_{\Om} \te dx\right|_{2,\Om} + \frac{1}{|\Om|}\left|\int_{\Om} \te dx\right|_{2,\Om} \le c |\nb \te|_{2,\Om} +  |\int_{\Om} \te dx| \eal \een
From (\ref{1}), (\ref{2}) 
\benn \|\te\|^2_{V(\Om^t)} \le c e^{c|d_1|_{3,6,S_2^t(-a)}^6 } |\te(0)|^2_{2,\Om} + c |\int_{\Om} \theta dx|^2 \\ \le c e^{c|d_1|_{3,6,S_2^t(-a)}^6 } |\te(0)|^2_{2,\Om} + c |d_1|^2_{1,S_2^t(-a)} {\theta^*}^2 + c |\te(0)|^2_{1,\Om} \eenn 

\noindent Therefore, this  implies (\ref{5.30}) and ends the proof.
\end{proof}

\section{Weak solutions}

\begin{theorem}\label{t4.1}
Let the assumptions of Theorem~\ref{l3.1} and Theorem~\ref{l5.8} hold. Then there exists a weak solution to problems (\ref{4.8})-(\ref{4.9}) and (\ref{1.1}).
\end{theorem}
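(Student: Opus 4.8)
The plan is to construct a weak solution of the transformed problem (\ref{4.9})--(\ref{2-6}) by Galerkin approximation and then invoke Definition~\ref{d2-1}($2^\circ$) together with $v=w+\de$, $\te=\vartheta$ to obtain a weak solution of (\ref{1.1}). First I would fix two bases adapted to the boundary conditions: a family $\{\psi_k\}_{k\in\N}\subset V_1$ of eigenfunctions of the Stokes-type operator associated with the symmetric bilinear form $a(\psi,\psi')=\nu\int_\Om\D(\psi)\cdot\D(\psi')\,dx+\gamma\sum_{\alpha=1}^2\int_{S_1}(\psi\cdot\bar\tau_\alpha)(\psi'\cdot\bar\tau_\alpha)\,dS$ on $V_1$, and a family $\{\phi_k\}_{k\in\N}\subset V_2$ of eigenfunctions of the Neumann Laplacian; both are orthonormal in $L_2(\Om)$ and orthogonal in $H^1(\Om)$. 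I would then seek approximate solutions $w^N=\sum_{k=1}^Nc_k^N(t)\psi_k$ and $\vartheta^N=\sum_{k=1}^Ng_k^N(t)\phi_k$ solving the Galerkin projection of (\ref{2-13})--(\ref{2-14}) onto these $N$-dimensional spans, with initial coefficients the Fourier coefficients of $w(0)$ and $\vartheta_0$. The resulting system of ODEs for $(c^N,g^N)$ has right-hand sides quadratic, hence locally Lipschitz, in the coefficients, while the fixed drift $\de$ enters only through coefficients integrable in $t$ under the hypotheses on $d_i$; a unique local solution therefore exists by the Carath\'eodory--Picard theorem.

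Next I would derive bounds uniform in $N$. Since $w^N\in\mathrm{span}\{\psi_1,\dots,\psi_N\}$ and $\vartheta^N\in\mathrm{span}\{\phi_1,\dots,\phi_N\}$, it is legitimate to choose $\psi=w^N$ and $\phi=\vartheta^N$ as test functions, which reproduces \emph{verbatim} at the finite-dimensional level the computation of Lemma~\ref{l3-1}: the non-small drift contributions $w^N\cdot\nb\de\cdot w^N$ and $\de\cdot\nb w^N\cdot w^N$ are absorbed by $\nu\|w^N\|_{1,\Om}^2$ through the weighted estimates of Lemma~\ref{l2.2} with the same parameter choice (\ref{3.24}) for $\varepsilon,\varrho$. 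This yields $\|w^N\|_{V(\Om^T)}+\|\vartheta^N\|_{V(\Om^T)}\le C$ uniformly in $N$, and in particular global existence of $(c^N,g^N)$ on $[0,T]$. The two-sided bound $\te_*\le\vartheta\le\te^*$ is not expected to survive Galerkin truncation directly, so I would instead recover it for the limit function $\vartheta$ via the maximum-principle arguments already proved in Lemmas~\ref{l2.3}--\ref{l2.4}, which also keeps $\om(\vartheta)$ under control.

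The bounds above give, along a subsequence, $w^N\rightharpoonup w$ in $L_2(0,T;V_1)$ and weak-$*$ in $L_\infty(0,T;L_2(\Om))$, and likewise for $\vartheta^N$. To pass to the limit in the quadratic convective terms $w^N\cdot\nb w^N$ and $w^N\cdot\nb\vartheta^N$ I would establish strong convergence in $L_2(\Om^T)$ via the Aubin--Lions lemma: the energy bounds give $w^N\otimes w^N\in L_{5/3}(\Om^T)$ through the interpolation embedding $L_\infty(0,T;L_2)\cap L_2(0,T;H^1)\hookrightarrow L_{10/3}(\Om^T)$, whence the Galerkin equations bound $\pa_tw^N$ in $L_{5/3}(0,T;V_1')$; combined with $w^N$ bounded in $L_2(0,T;V_1)$, the compactness of the embedding of $V_1$ into $L_2(\Om)$ and the continuous embedding $L_2(\Om)\subset V_1'$ force $w^N\to w$ strongly in $L_2(\Om^T)$, and analogously $\vartheta^N\to\vartheta$. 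Pairing strong convergence of one factor with weak convergence of the gradient identifies the limits of all nonlinear terms, while the coupling term passes to the limit because $\vartheta^N\to\vartheta$ a.e.\ and the temperature bounds make $\om(\vartheta^N)f\to\om(\vartheta)f$ in $L_{6/5}(\Om^T)$ by dominated convergence. Recovering (\ref{2-13})--(\ref{2-14}) for fixed test functions and then by density, verifying the initial data through weak continuity in time, and finally setting $v=w+\de$, $\te=\vartheta$ completes the argument.

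The step I expect to be the main obstacle is the compactness needed for the convective terms: obtaining the uniform time-derivative bound in $V_1'$ (respectively $V_2'$) so that Aubin--Lions applies, and in particular checking that the drift terms $\de\cdot\nb w^N$ and $w^N\cdot\nb\de$, controlled only in the weighted norms of Lemma~\ref{l2.2} rather than in ordinary $L_p$, spoil neither that bound nor the identification of the nonlinear limits. The coupling through $\om(\vartheta)$ is a secondary difficulty, resolved once the pointwise temperature bounds of Lemma~\ref{l2.4} are available for the limit.
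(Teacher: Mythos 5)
Your proposal is correct and follows essentially the same route as the paper: Galerkin approximation in bases orthonormal in $L_2(\Omega)$ and orthogonal in $V_1$, $V_2$, uniform-in-$N$ bounds obtained by repeating the energy computation of Lemma~\ref{l3-1} at the finite-dimensional level (including the weighted-space absorption of the drift terms with the parameter choice (\ref{3.24})), a dual-norm bound on the time derivatives, Aubin--Lions compactness, and passage to the limit. The only differences are inessential bookkeeping: the paper bounds $\partial_t w_m$ in $L_{4/3}(kT,(k+1)T;V_1^*)$ and $\partial_t\vartheta_m$ in $L_{8/7}(kT,(k+1)T;V_2^*)$ by testing against $u\in L_4(V_1)$, $\zeta\in L_8(V_2)$ via the interpolation $\|w_m\|_{L_4(\Omega)}\le c\|w_m\|_{L_2(\Omega)}^{1/4}\|w_m\|_{V_1}^{3/4}$, where you use the $L_{10/3}(\Omega^T)$ embedding and $L_{5/3}(0,T;V_1')$, and your choice of eigenfunction bases is a particular instance of the paper's assumptions on the bases.
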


\begin{proof}
We prove the existence through the Galerkin approximations. We set $\{\psi_i\}_{i=1}^\infty$ and $\{\phi_i\}_{i=1}^\infty$ be bases in $V_1$ and $V_2$, respectively. We choose the bases which are orthonormal in $L_2(\Omega)$ and orthogonal in $V_1$ and $V_2$, respectively. Next, for each $m\in\N$ we define an approximate solution $w_m(t)=\sum_{i=1}^mc_{im}(t)\psi_i$, $\vartheta_m(t)=\sum_{i=1}^md_{im}(t)\phi_i$, that satisfy the following system
\begin{equation}\eqal{
&\intop_\Omega w'_m(t)\cdot\psi_jdx+\intop_\Omega H(w_m(t))\cdot\psi_jdx+\nu \intop_\Omega\D(w_m(t))\cdot\D(\psi_j)dx\cr
&\quad+\gamma\sum_{\alpha=1}^2\intop_{S_1}w_m(t)\cdot\bar\tau_\alpha\psi_j\cdot \bar\tau_\alpha dS-\sum_{\alpha,\beta=1}^2\intop_{S_\beta}B_{\beta\alpha} (\delta)\psi_j\cdot\bar\tau_\alpha dS\cr
&=\intop_\Omega \omega(\vartheta_m(t))f(t)\cdot\psi_jdx+\intop_\Omega F(\delta,t)\cdot\psi_jdx\cr
&{\rm for}\ j=1,\cdots,m,\ {\rm a.a.}\ t\in[kT,(k+1)T],\ k\in\N_0,\cr}
\label{4-1}
\end{equation}
\begin{equation}\eqal{
&\intop_\Omega\vartheta'_m(t)\phi_jdx+\varkappa\intop_\Omega\nabla\vartheta_m(t) \cdot\nabla\phi_jdx\cr
&\quad+\intop_\Omega w_m(t)\cdot\nabla\vartheta_m(t)\phi_jdx=-\intop_\Omega \delta\cdot\nabla\vartheta_m(t)\phi_jdx\cr
&{\rm for}\ j=1,\cdots,m,\ {\rm a.a.}\ t\in[kT,(k+1)T],\ k\in\N_0,\cr}
\label{4-2}
\end{equation}
$$\eqal{
&w_m|_{t=kT}=w_{kTm},\cr
&\vartheta_m|_{t=kT}=\vartheta_{kTm},\cr}
$$
where $w_{kTm}\in{\rm span}\{\psi_1,\cdots,\psi_m\}$, $w_{kTm}\to w(kT)$ strongly in $L_2(\Omega)$ as $m\to\infty$, $\vartheta_{kTm}\in{\rm span}\{\phi_1,\cdots,\phi_m\}$, $\vartheta_{kTm}\to\vartheta(kT)$ strongly in $L_2(\Omega)$.

\noindent
We multiply (\ref{4-1}) by $c_{jm}$ and sum for $j=1,\cdots,m$. In analogous way, we multiply (\ref{4-2}) by $d_{jm}$ and sum for $j=1,\cdots,m$. Then we obtain relations (\ref{3.8}) and (\ref{3-5}) with $w$ and $\vartheta$ replaced by $w_m$ and $\vartheta_m$. For terms on the right-hand sides of these inequalities we can obtain the same estimates as in the proof of Lemma \ref{l3-1}. Finally, we conclude inequality (\ref{3.3}) for the Galerkin approximations $w_m$ and $\vartheta_m$ which yield the inequality
\begin{equation}\eqal{
&\|w_m\|_{L_\infty(kT,(k+1)T;L_2(\Omega))}+ \|w_m\|_{L_2(kT,(k+1)T;H^1(\Omega))}\cr
&\quad+\|\vartheta_m\|_{L_\infty(kT,(k+1)T;L_2(\Omega))}+ \|\vartheta_m\|_{L_2(kT,(k+1)T;H^1(\Omega))}\cr
&\le c(A_2(T))\quad {\rm for\ all}\ \ t\in(kT,(k+1)T),\ k\in\N_0,\cr}
\label{4-3}
\end{equation}
where $A_2=A_2(T)$ is described in (\ref{3.23}) which also holds for $w_m$ and $\vartheta_m$. 

\noindent
In order to pass to the limit in the second term of (\ref{4-1}) it is necessary to have estimates for  $\|w'_m\|_{L_{4/3}(kT,(k+1)T;V_1^*)}$ and $\|\vartheta'_m\|_{L_{8/7}(kT,(k+1)T;V_2^*)}$. Therefore, we apply Aubin-Lions Lemma (see \cite[Ch. 1, Sect. 5]{L}).

\noindent
Suppose that $u\in L_4(kT,(k+1)T;V_1)$ and  $\|u\|_{L_4(kT,(k+1)T;V_1)}\le 1$. We can express  $u(x,t)=\sum_{i=1}^\infty a_i(t)\psi_i(x)$, with $a_i(t)=\intop_\Omega u(x,t)\psi_ixdx$. Let
$$
u_m(x,t):=\sum_{k=1}^ma_k(t)\psi_k(x).
$$
Then
$$
\|u_m\|_{L_4(kT,(k+1)T;V_1)}\le\|u\|_{L_4(kT,(k+1)T;V_1)}.
$$
Thus, from (\ref{4-1})
$$\eqal{
&\bigg\|{\partial w_m\over\partial t}\bigg\|_{L_{4/3}(kT,(k+1)T;V_1^*)}= \sup_{u\in L_4(kT,(k+1)T;V_1)\atop\|u\|\le1}\left|\intop_{kT}^{(k+1)T}\intop_\Omega {\partial w_m\over\partial t}udxdt\right|\cr
&=\sup_{u\in L_4(kT,(k+1)T,V_1)\atop\|u\|\le1}\left|\intop_{kT}^{(k+1)T} \intop_\Omega{\partial w_m\over\partial t}u_mdxdt\right|\cr
&=\sup_{u\in L_4(kT,(k+1)T;V_1)\atop\|u\|\le1}\bigg|\intop_{kT}^{(k+1)T} \intop_\Omega \omega(\vartheta_m) u_m f dxdt\cr
&\quad+\intop_{kT}^{(k+1)T}\intop_\Omega F(\delta,t)u_mdxdt- \nu\intop_{kT}^{(k+1)T}\intop_\Omega\D(w_m)\cdot\D(u_m)dxdt\cr
&\quad-\gamma\sum_{\alpha=1}^2\intop_{kT}^{(k+1)T}\intop_{S_1}w_m\cdot \bar\tau_\alpha u_m\cdot\bar\tau_\alpha dS-\intop_{kT}^{(k+1)T}\intop_\Omega H(w_m)u_mdxdt\cr
&\quad+\sum_{\alpha,\beta=1}^2\intop_{kT}^{(k+1)T}\intop_{S_\beta} B_{\beta\alpha}(\delta)u_m\cdot\bar\tau_\alpha dS\bigg|.\cr}
$$
Applying definitions of $F(\delta,t)$ and $H(w)$ and the following relation 
$$
\intop_\Omega w_m\cdot\nabla w_m\cdot u_mdx=-\intop_\Omega w_m\cdot (w_m\cdot\nabla u_m)dx
$$
we obtain 

$$\eqal{
&\bigg\|{\partial w_m\over\partial t}\bigg\|_{L_{4/3}(kT,(k+1)T;V_1^*)}\le c\bigg(\sup_u\intop_{kT}^{(k+1)T}\|f\|_{L_{6/5}(\Omega)}\|u_m\|_{L_6(\Omega)}dt\cr
&\quad+\sup_u\intop_{kT}^{(k+1)T}\|\vartheta_m\|_{L_2(\Omega)}\|f\|_{L_3(\Omega)} \|u_m\|_{L_6(\Omega)}dt\cr
&\quad+\sup_u\intop_{kT}^{(k+1)T}\|\delta_t\|_{L_{6/5}(\Omega)} \|u_m\|_{L_6(\Omega)}dt\cr
&\quad+\sup_u\intop_{kT}^{(k+1)T}\|\nabla\delta\|_{L_2(\Omega)} \|\delta\|_{L_2(\Omega)}\|u_m\|_{L_6(\Omega)}dt\cr
&\quad+\sup_u\intop_{kT}^{(k+1)T}\|\D(\delta)\|_{L_2(\Omega)}\|\nabla u_m\|_{L_2(\Omega)}dt\cr
&\quad+\sup_u\intop_{kT}^{(k+1)T}\|\D(w_m)\|_{L_2(\Omega)} \|\D(u_m)\|_{L_2(\Omega)}dt\cr
}$$ $$\eqal{
&\quad+\sup_u\intop_{kT}^{(k+1)T}\|w_m\|_{L_2(S_1)}\|u_m\|_{L_2(S_1)}dt\cr
&\quad+\sup_u\intop_{kT}^{(k+1)T}\sum_{\alpha=1}^2 \|\delta\cdot\bar\tau_\alpha\|_{L_2(S_1)}\|u_m\|_{L_2(S_1)}dt\cr
&\quad+\sup_u\intop_{kT}^{(k+1)T}\|w_m\|_{L_4(\Omega)}^2\|\nabla u_m\|_{L_2(\Omega)}dt\cr
&\quad+\sup_u\intop_{kT}^{(k+1)T}\|w_m\|_{L_4(\Omega)}\|u_m\|_{L_4(\Omega)} \|\nabla\delta\|_{L_2(\Omega)}dt\cr
&\quad+\sup_u\intop_{kT}^{(k+1)T}\|\nabla w_m\|_{L_2(\Omega)}\|u_m\|_{L_4(\Omega)}\|\delta\|_{L_4(\Omega)}dt\bigg),\cr}
$$
where supremum is taken over all functions $u\in L_4(kT,(k+1)T;V_1)$ with $\|u\|_{L_4(kT,(k+1)T;V_1)}\le 1$.

\noindent
Next, we apply imbedding theorems and the interpolation inequality
\begin{equation}
\|w_m\|_{L_4(\Omega)}\le c\|w_m\|_{L_2(\Omega)}^{1/4}\|w_m\|_{V_1}^{3/4}
\label{4.4}
\end{equation}
to have
\begin{equation}\eqal{
&\bigg\|{\partial w_m\over\partial t}\bigg\|_{L_{4/3}(kT,(k+1)T;V_1^*)}\le c[\|f\|_{L_{4/3}(kT,(k+1)T;L_{6/5}(\Omega))}\cr
&\quad+\|f\|_{L_{4/3}(kT,(k+1)T;L_3(\Omega))} \|\vartheta_m\|_{L_\infty(kT,(k+1)T,L_2(\Omega))}\cr
&\quad+\|\delta_t\|_{L_{4/3}(kT,(k+1)T;L_{6/5}(\Omega))}+ \|\nabla\delta\|_{L_{4/3}(kT,(k+1)T;L_2(\Omega))}\cdot\cr
&\quad\cdot\|\delta\|_{L_\infty(kT,(k+1)T;L_3(\Omega))}+ \|\D(\delta)\|_{L_{4/3}(kT,(k+1)T;L_2(\Omega))}\cr
&\quad+\|\D(w_m)\|_{L_{4/3}(kT,(k+1)T;L_2(\Omega))}+ \|w_m\|_{L_{4/3}(kT,(k+1)T;L_2(S_1))}\cr
&\quad+\sum_{\alpha=1}^2 \|\delta\cdot\bar\tau_\alpha\|_{L_{4/3}(kT,(k+1)T;L_2(S_1))}+ \|w_m\|_{L_\infty(kT,(k+1)T;L_2(\Omega))}^{1/2}\cdot\cr
&\quad\cdot\|w_m\|_{L_2(kT,(k+1)T;V_1)}^{3/2}+ \|w_m\|_{L_2(kT,(k+1)T;V_1)}\cdot\cr
&\quad\cdot(\|\nabla\delta\|_{L_4(kT,(k+1)T;L_2(\Omega))}+ \|\delta\|_{L_4(kT,(k+1)T;L_4(\Omega))})].\cr}
\label{4-5}
\end{equation}

\noindent
To estimate the norms of functions $w_m$ and $\vartheta_m$ we apply (\ref{4.3}) and all norms of function $\delta$ are estimated by some norms of $d$.

\noindent
Then (\ref{4-5}) implies
\begin{equation}
\bigg\|{\partial w_m\over\partial t}\bigg\|_{L_{4/3}(kT,(k+1)T;V_1^*)}\le C,
\label{4-6}
\end{equation}
where $C=C(T,f,d)$ is a positive constant.

Now, assume that $\zeta\in L_8(kT,(k+1)T;V_2)$ and $\|\zeta\|_{L_8(kT,(k+1)T;V_2)}\le 1$. Then
$$
\zeta(x,t)=\sum_{i=1}^\infty b_i(t)\phi_i(x),\quad {\rm where}\quad b_i(t)=\intop_\Omega\zeta(x,t)\phi_i(x).
$$
Let
$$
\zeta_m(x,t)=\sum_{k=1}^mb_k(t)\phi_k(x).
$$
We have
\begin{equation}
\|\zeta_m\|_{L_8(kT,(k+1)T;V_2)}\le\|\zeta\|_{L_8(kT,(k+1)T;V_2)}
\label{4-7}
\end{equation}
Thus
$$\eqal{
&\bigg\|{\partial\vartheta_m\over\partial t}\bigg\|_{L_{8/7}(kT,(k+1)T;V_2^*)}= \sup_{\zeta\in L_8(kT,(k+1)T;V_2)\atop\|\zeta\|\le1}\bigg|\intop_{kT}^{(k+1)T} {\partial\vartheta_m\over\partial t}\zeta dxdt\bigg|\cr
&=\sup_{\zeta\in L_8(kT,(k+1)T;V_2)\atop\|\zeta\|\le1}\bigg|\intop_{kT}^{(k+1)T} \intop_\Omega{\partial\vartheta_m\over\partial t}\zeta_mdxdt\bigg|\cr
&=\sup_\zeta\bigg|-\varkappa\intop_{kT}^{(k+1)T}\intop_\Omega\nabla\vartheta_m \cdot\nabla\zeta_mdxdt\cr
&\quad-\intop_{kT}^{(k+1)T}\intop_\Omega w_m\cdot\nabla\vartheta_m\zeta dxdt- \intop_{kT}^{(k+1)T}\intop_\Omega\delta\cdot\nabla\vartheta_m\zeta_mdxdt\bigg| \cr
&\quad \le c\bigg(\sup_\zeta\intop_{kT}^{(k+1)T}\|\nabla\vartheta_m\|_{L_2(\Omega)} \|\nabla\zeta_m\|_{L_2(\Omega)}dt\cr
&\quad+\sup_\zeta\intop_{kT}^{(k+1)T}\|w_m\|_{L_4(\Omega)}\|\zeta_m\|_{L_4(\Omega)} \|\nabla\vartheta_m\|_{L_2(\Omega)}dt\cr
&\quad+\sup_\zeta\intop_{kT}^{(k+1)T}\|\nabla\vartheta_m\|_{L_2(\Omega)} \|\delta\|_{L_4(\Omega)}\|\zeta_m\|_{L_4(\Omega)}dt
\bigg).\cr}
$$
Therefore, with the imbedding theorem and inequality (\ref{4.4}) we get
\begin{equation}\eqal{
&\bigg\|{\partial\vartheta_m\over\partial t}\bigg\|_{L_{8/7}(kT,(k+1)T;V_2^*)}\le c[\sup_\zeta(\|\nabla\vartheta_m\|_{L_2(kT,(k+1)T;L_2(\Omega))}\cr
&\quad\cdot\|\nabla\zeta_m\|_{L_2(kT,(k+1)T;L_2(\Omega))})+\sup_\zeta (\|w_m\|_{L_\infty(kT,(k+1)T;L_2(\Omega))}^{1/4}\cr
&\quad\cdot\|\nabla\vartheta_m\|_{L_2(kT,(k+1)T;L_2(\Omega))}\|\nabla w_m\|_{L_2(kT,(k+1)T;L_2(\Omega))}^{3/8}\cr
&\quad\cdot\|\zeta_m\|_{L_8(kT,(k+1)T;L_2(\Omega))}^{1/8})+\sup_\zeta (\|\nabla\vartheta_m\|_{L_2(kT,(k+1)T;L_2(\Omega))}\cr
&\quad\cdot\|\delta\|_{L_4(kT,(k+1)T;L_4(\Omega))} \|\zeta_m\|_{L_4(kT,(k+1)T;L_4(\Omega))}) \cr}
\label{4-8}
\end{equation}
where supremum is taken over all functions $\zeta\in L_8(kT,(k+1)T;V_2)$ with $\|\zeta\|_{L_8(kT,(k+1)T;V_2)}\le 1$.

\noindent
Then, using (\ref{4-8}), (\ref{4-3}) and the definition of $\delta$ we obtain
\begin{equation}
\bigg\|{\partial\vartheta_m\over\partial t}\bigg\|_{L_{8/7}(kT,(k+1)T;V_2^*)}\le C
\label{4-9}
\end{equation}
with $C=C(T,f,d)$ - a positive constant.

\noindent
By (\ref{4-3}), (\ref{4-6}) and (\ref{4-9}) there exist subsequences of $(w_m)$ and $(\vartheta_m)$ still denoted by $(w_m)$ and $(\vartheta_m)$ such that
$$\eqal{
w_m\to w\ &{\rm weakly\ in}\ L_2(kT,(k+1)T;V_1),\cr
{\rm and}\ *-&{\rm weakly\ in}\ L_\infty(kT,(k+1)T;L_2(\Omega)),\cr
\noalign{\vskip 8pt}
\vartheta_m\to\vartheta\ &{\rm weakly\ in}\ L_2(kT,(k+1)T;V_2)\cr
{\rm and}\ *-&{\rm weakly\ in}\ L_\infty(kT,(k+1)T;L_2(\Omega)),\cr
\noalign{\vskip 8pt}
w_{mt}\to w_t\ &{\rm weakly\ in}\ L_{4/3}(kT,(k+1)T;V_1^*),\cr
\vartheta_{mt}\to\vartheta_t\ &{\rm weakly\ in}\ L_{8/7}(kT,(k+1)T;V_2^*).\cr}
$$
Moreover, the Aubin-Lions lemma yields that
$$\eqal{
w_m\to w\ &{\rm strongly\ in}\ L_2(kT,(k+1)T,L_2(\Omega))\cr
{\rm and}\ \vartheta_m\to\vartheta\ &{\rm strongly\ in}\ L_2(kT,(k+1)T;L_2(\Omega))\cr}
$$
for some subsequences $(w_m)$ and $(\vartheta_m)$ of sequences $(w_m)$, $(\vartheta_m)$.

\noindent
Consequently, by (\ref{4.3}) and above convergences in $L^2(kT,(k+1)T;L_2(\Omega))$, we obtain 
$$\eqal{
w_m\to w\ &{\rm strongly\ in}\ L_2(kT,(k+1)T;L_p(\Omega)),\cr
\vartheta_m\to\vartheta\ &{\rm strongly\ in}\ L_2(kT,(k+1)T;L_p(\Omega))\ {\rm for}\ p<6.\cr}
$$
With above convergences we are able to pass to the limit in (\ref{4-1})--(\ref{4-2}) in the standard way.  This yield that $(w,\vartheta)$ is the weak solution to (\ref{4.8})-(\ref{4.9}).

\noindent
\end{proof}
\begin{thebibliography}{XXXX}
 
\bibitem[Ad]{Ad} Adams, R.: Sobolev spaces, Pure and Applied Mathematics, Academic Press, New York 1975, 268 pp.
\bibitem[BIN]{BIN} Besov, O.V.; Il'in, V.P.; Nikol'skii, S.M.: Integral Representations of Functions and Imbedding Theorems, Nauka, Moscow 1975 (in Russian); English transl. Vol. I. Scripta Series in Mathematics, V.H. Winston, New York 1978.
\bibitem[N]{N} Nikol'skii, S.M.: Approximation of Functions with many variables and Imbedding Theorems, Nauka, Moscow 1977 (in Russian).
\bibitem[G]{G} Golovkin, K.K.: On equivalent norms for fractional spaces, Trudy Mat. Inst. Steklov 66 (1962), 364--383 (in Russian); English transl.: Amer. Math. Soc. Transl. 81 (2) (1969), 257--280.
\bibitem[L1]{L1} Ladyzhenskaya, O.A.: Mathematical Theory of Viscous Incompressible Flow, Nauka, Moscow 1970 (in Russian); Second English edition, revised and enlarged, translated by Richard A. Silverman and John Chu, Mathematics and Its Applications, Vol. 2, xviii+224 pp. Gordon and Breach Science Publishers, New York.
\bibitem[L]{L} J. L. Lions, Quelques methodes de r\'esolution des probl\'emes aux limites non lin\'eaires, Paris 1969.   
\bibitem[Tr]{Tr} Triebel, H.: Theory of functions spaces, Akademische Verlagsgesellschaft, Geest\&Portig, K.G., Leipzig 1983, pp. 284.
\bibitem[RZ1]{RZ1} Renc\l awowicz, J.; Zaj\c{a}czkowski, W.M.: On the Stokes system in cylindrical domains, J. Math. Fluid Mech. (2022) 24:64 (2022) 1--56, doi.org/10.1007/s00021-022-00698-z.
\bibitem[RZ2]{RZ2} Renc\l awowicz, J.; Zaj\c{a}czkowski, W.M.: The Large Flux Problem to the Navier-Stokes Equations, Birkh\"auser, Lecture Notes in Mathematical Fluid Mechanics, Springer 2019.
\bibitem[RZ3]{RZ3} Renc\l awowicz, J.; Zaj\c{a}czkowski, W.M.: Existence of solutions to the Poisson equation in $L_2$-weighted spaces, Appl. Math. 37 (3) (2010), 309--323.
\bibitem[RZ4]{RZ4} Renc\l awowicz, J.; Zaj\c{a}czkowski, W.M.: Existence of solutions to the Poisson equation in $L_p$-weighted spaces, Appl. Math. 37 (2010), 1--12.
\bibitem[SS]{SS} Solonnikov, V.A.; Shchadilov, V.E.:  On boundary value problem for a stationary Navier-Stokes equations, Trudy Mat. Inst. Steklova 125 (1973), 196--210 (in Russian); English transl.: Proc. Steklov Inst. Math. 125 (1973), 186--199.
\bibitem[ZZ]{ZZ} Zadrzy\'nska, E.; Zaj\c{a}czkowski, W.M.: Existence of global weak solutions to 3d incompressible heat-conducting motions with large flux, Math. Meth. Appl. Sc. 44 (2021), 6259--6281.
\end {thebibliography} 

\end{document}